
%
%

\documentclass[reqno,11pt]{amsart}
\oddsidemargin 0cm 
\evensidemargin 0cm 
\topmargin -0.5cm
\textwidth 15cm 
\textheight 21cm
%

\usepackage{amssymb}
\usepackage{amsmath}
\usepackage{stmaryrd}
\usepackage{graphicx}
\usepackage{color}
\usepackage{cite}
\usepackage[font=small]{caption}
\usepackage{cleveref}
\usepackage{tikz}
\usepackage{pgfplots}
\pgfplotsset{compat=1.10}
\usepgfplotslibrary{fillbetween}
\usetikzlibrary{patterns}
%
%
%
\newtheorem{theorem}{Theorem}[section]
\newtheorem{corollary}[theorem]{Corollary}
\newtheorem{lemma}[theorem]{Lemma}
\newtheorem{proposition}[theorem]{Proposition}

\newtheorem{remark}[theorem]{Remark}

%
%
%
\newcommand{\R}{\mathbb{R}}

\newcommand{\rd}{\mathrm{d}}

\definecolor{cadmiumgreen}{rgb}{0.0, 0.42, 0.24}

\numberwithin{equation}{section}
\numberwithin{figure}{section}
%

%

\begin{document}

\title[Free Boundary Transmission MEMS Model]{Stationary States to a Free Boundary Transmission Problem for an Electrostatically Actuated Plate}

\author{Philippe Lauren\c{c}ot}
\address{Institut de Math\'ematiques de Toulouse, UMR~5219, Universit\'e de Toulouse, CNRS \\ F--31062 Toulouse Cedex 9, France}
\email{laurenco@math.univ-toulouse.fr}
\author{Christoph Walker}
\address{Leibniz Universit\"at Hannover\\ Institut f\" ur Angewandte Mathematik \\ Welfengarten 1 \\ D--30167 Hannover\\ Germany}
\email{walker@ifam.uni-hannover.de}
%
%
\date{\today}
\keywords{Transmission problem,  shape derivative, minimizers, obstacle problem}
\subjclass{35R35 - 49Q10 - 49J40 - 35J50 - 35Q74}
%
\begin{abstract}
A two-dimensional free boundary transmission problem arising in the modeling of an electrostatically actuated plate is considered and a representation formula for the derivative of the associated electrostatic energy with respect to the deflection of the plate is derived. The latter paves the way for the construction of energy minimizers and also provides the Euler-Lagrange equation satisfied by these minimizers. A by-product is the monotonicity of the electrostatic energy with respect to the deflection.
\end{abstract}
%
\maketitle
%
\section{Introduction}\label{IMR}

We consider a model for a microelectromechanical system (MEMS) featuring an elastic, electrostatically actuated plate with positive thickness as introduced in~\cite{LW18}. More precisely, given a finite interval $D:=(-L,L)$ with $L>0$, let the function $u\in C(\bar{D},[-H,\infty))$ with $u(\pm L)=0$ measure the deflection from rest of the lower part of an elastic plate with thickness $d>0$, clamped at its boundaries and suspended above a fixed ground plate, the latter being represented by $D$ and located at $z=-H$ with $H>0$ (see Figure~\ref{Fig1}). The deflected elastic plate is then
\begin{equation*}
	\Omega_2(u):= \left\{ (x,z)\in D\times \mathbb{R}\,:\, u(x)<  z <  u(x)+d\right\}\,, 
\end{equation*}
while the region between the ground plate and the deflected elastic plate is
\begin{equation*}
	\Omega_1(u) := \left\{ (x, z)\in D\times \mathbb{R} \,:\, -H< z< {u}(x)\right\}\,.
\end{equation*}
The two regions are separated by the interface
\begin{equation*}
	\Sigma(u) := \left\{ (x,z)\in D\times \mathbb{R}\,:\, z=  u(x)>-H \right\}\,.
\end{equation*}
The deflection of the plate being triggered by electrostatic actuation, the total energy of the device is
\begin{subequations}\label{E}
\begin{equation}
	E(u):= E_m(u)+E_e(u)
\end{equation}
with  mechanical energy  $E_m(u)$  and   electrostatic energy $E_e(u)$. The former is given by
\begin{equation}\label{E1}
	E_m(u):=\frac{\beta}{2}\|\partial_x^2u\|_{L_2(D)}^2 +\left(\frac{\tau}{2}+\frac{a}{4}\|\partial_x u\|_{L_2(D)}^2\right)\|\partial_x u\|_{L_2(D)}^2
\end{equation}
with $\beta>0$ and $a, \tau\ge 0$, taking into account bending and external stretching effects of the elastic plate. The electrostatic energy
\begin{equation}\label{E2}
E_e(u):=-\frac{1}{2}\int_{\Omega(u)} \sigma \vert\nabla \psi_u\vert^2\,\rd (x,z)
\end{equation}
\end{subequations}
involves the electrostatic potential $\psi_u$ in the subdomain 
\begin{equation*}
 \Omega(u) := \left\{ (x,z)\in D\times \mathbb{R} \,:\, -H<  z <  u(x)+d \right\} = \Omega_1(u)\cup \Omega_2(u)\cup \Sigma(u)
\end{equation*}
of $D\times (-H,\infty)$. The electrostatic potential $\psi_u$ is the solution to the transmission problem
\begin{subequations}\label{psi}
	\begin{align}
		\mathrm{div}(\sigma\nabla\psi_u) &=0 \quad\text{in }\ \Omega(u)\,,\label{a1a}\\
		\llbracket \psi_u \rrbracket =\llbracket \sigma\nabla \psi_u \rrbracket \cdot \mathbf{n}_{ \Sigma(u)} &=0 \quad\text{on }\ \Sigma(u)\,,\label{a1b}\\
		\psi_u&=h_u\quad\text{on }\ \partial\Omega(u)\,,\label{a1c}
	\end{align}
\end{subequations}
where $\llbracket \cdot \rrbracket$ denotes the (possible) jump across the interface $\Sigma(u)$; that is, 
\begin{equation*}
	\llbracket f \rrbracket(x,u(x)) := f|_{\Omega_1(u)}(x,u(x)) - f|_{\Omega_2(u)}(x,u(x))\,, \qquad x\in D\,,
\end{equation*}
whenever meaningful for a function $f:\Omega(u)\to\mathbb{R}$. Moreover,
\begin{equation}
	\sigma := \sigma_1 \mathbf{1}_{\Omega_1(u)} + \sigma_2 \mathbf{1}_{\Omega_2(u)} \label{sigma}
\end{equation}
involves the material dependent constant permittivities $\sigma_2,\sigma_1>0$. The unit normal vector field to $\Sigma(u)$  (pointing into $\Omega_2(u)$) is
\begin{equation*}
\mathbf{n}_{ \Sigma(u)}:=\frac{(-\partial_x u, 1)}{\sqrt{1+(\partial_x u)^2}}\,.
\end{equation*}
\begin{figure}
	\begin{tikzpicture}[scale=0.9]
		\draw[black, line width = 0.8pt, dashed] (-7,0)--(7,0);
		\draw[black, line width = 0.8pt, dashed] (-7,-0.5)--(7,-0.5);
		\draw[black, line width = 2pt] (-7,0)--(-7,-2.5);
		\draw[black, line width = 2pt] (7,-2.5)--(7,0);
		\draw[black, line width = 2pt] (-7,-2.5)--(7,-2.5);
		\draw[blue, line width = 2pt] plot[domain=-7:1] (\x,{-0.75-0.75*cos((pi*(\x-1)/8) r)});
		\draw[blue, line width = 2pt] plot[domain=1:7] (\x,{-0.75-0.75*cos((pi*(\x-1)/6) r)});
		\draw[blue, line width = 2pt] plot[domain=-7:1] (\x,{-1.25-0.75*cos((pi*(\x-1)/8) r)});
		\draw[blue, line width = 2pt] plot[domain=1:7] (\x,{-1.25-0.75*cos((pi*(\x-1)/6) r)});
		\draw[blue, line width = 1pt, arrows=->] (2,-0.5)--(2,-1.9); 
		\node at (2.2,-1) {${\color{blue} u}$};
		\node at (-5,-1.5) {${\color{blue} \Omega_1(u)}$};
		\node at (-2,0.5) {${\color{blue} \Omega_2(u)}$};
		\draw (-2.6,0.4) edge[->,bend right,line width = 1pt] (-3.7,-0.85);
		\node at (0,-3.25) {$D$};
		\node at (5.75,-1.75) {{\color{blue} $\Sigma(u)$}};
		\draw (5.25,-1.75) edge[->,bend left, line width = 1pt] (3.5,-1.475);
		\node at (-7.8,1) {$z$};
		\draw[black, line width = 1pt, arrows = ->] (-7.5,-3)--(-7.5,1);
		\node at (-8,-2.5) {$-H$};
		\draw[black, line width = 1pt] (-7.6,-2.5)--(-7.4,-2.5);
		\node at (-7.8,-0.5) {$0$};
		\draw[black, line width = 1pt] (-7.6,-0.5)--(-7.4,-0.5);
		\node at (-7.8,0) {$d$};
		\draw[black, line width = 1pt] (-7.6,0)--(-7.4,0);
		\node at (-7,-3.25) {$-L$};
		\node at (7,-3.25) {$L$};
		\draw[black, line width = 1pt, arrows = <->] (-7,-2.75)--(7,-2.75);
	\end{tikzpicture}
	\caption{Geometry of $\Omega(u)$ for a state $u\in \mathcal{S}$ with empty coincidence set.}\label{Fig1}
\end{figure}
\noindent As for the boundary values in \eqref{a1c} we assume the particular form 
\begin{subequations}\label{bobbybrown}
\begin{equation}\label{exx1}
h_u(x,z) :=\zeta(z- u(x)+1)\,, \qquad (x,z)\in \bar{D}\times [-H,\infty)\,,
\end{equation}
where $V>0$ and 
\begin{equation}\label{z1}
\zeta\in C^2(\R)\,,\qquad \zeta|_{(-\infty,1]}\equiv 0\,,\qquad \zeta|_{[1+d,\infty)} \equiv V\,.
\end{equation}
\end{subequations}
For instance, $\zeta(r):=V\min\{1,(r-1)^m/d^m\}$ for $r>1$ and $m>2$ and $\zeta\equiv 0$ on $(-\infty,1]$ is a possible choice.
Note that
\begin{equation*}
h_u(x,-H)=0\,, \quad h_u(x,u(x)+d)=V\,,\qquad x\in D\,;
\end{equation*}
that is,  the ground plate and the top of the elastic plate are kept at different constant potentials. Let us emphasize that we explicitly allow that the elastic plate touches upon the ground plate when $u$ reaches the value $-H$ somewhere, a situation corresponding to a nonempty \textit{coincidence set}
\begin{equation}
	\mathcal{C}(u) := \{x\in D\,:\, u(x)=-H\}\,, \label{CS}
\end{equation}
as depicted in Figure~\ref{Fig2}. In this case, the region $\Omega_1(u)$ is not connected and its boundary features cusps, so that its connected components are not Lipschitz domains.

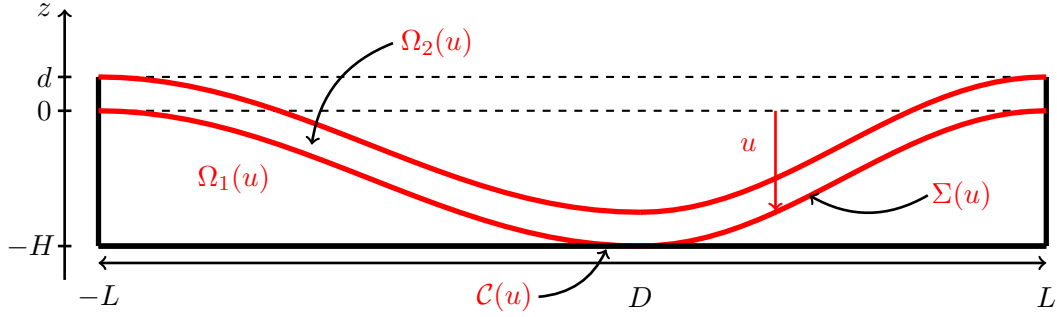
\begin{figure}
		\begin{tikzpicture}[scale=0.9]
		\draw[black, line width = 0.8pt, dashed] (-7,0)--(7,0);
		\draw[black, line width = 0.8pt, dashed] (-7,-0.5)--(7,-0.5);
		\draw[black, line width = 2pt] (-7,0)--(-7,-2.5);
		\draw[black, line width = 2pt] (7,-2.5)--(7,0);
		\draw[black, line width = 2pt] (-7,-2.5)--(7,-2.5);
		\draw[red, line width = 2pt] plot[domain=-7:1] (\x,{-1-cos((pi*(\x-1)/8) r)});
		\draw[red, line width = 2pt] plot[domain=1:7] (\x,{-1-cos((pi*(\x-1)/6) r)});
		\draw[red, line width = 2pt] plot[domain=-7:1] (\x,{-1.5-cos((pi*(\x-1)/8) r)});
		\draw[red, line width = 2pt] plot[domain=1:7] (\x,{-1.5-cos((pi*(\x-1)/6) r)});
		\draw[red, line width = 1pt, arrows=->] (3,-0.5)--(3,-1.975);
		\node at (2.6,-1) {${\color{red} u}$};
		\node at (-5,-1.5) {${\color{red} \Omega_1(u)}$};
		\node at (-2,0.5) {${\color{red} \Omega_2(u)}$};
		\draw (-2.65,0.5) edge[->,bend right,line width = 1pt] (-3.85,-1);
		\node at (1,-3.25) {$D$};
		\node at (5.75,-1.75) {{\color{red} $\Sigma(u)$}};
		\draw (5.25,-1.75) edge[->,bend left, line width = 1pt] (3.5,-1.75);
		\node at (-7.8,1) {$z$};
		\draw[black, line width = 1pt, arrows = ->] (-7.5,-3)--(-7.5,1);
		\node at (-8,-2.5) {$-H$};
		\draw[black, line width = 1pt] (-7.6,-2.5)--(-7.4,-2.5);
		\node at (-7.8,-0.5) {$0$};
		\draw[black, line width = 1pt] (-7.6,-0.5)--(-7.4,-0.5);
		\node at (-7.8,0) {$d$};
		\draw[black, line width = 1pt] (-7.6,0)--(-7.4,0);
		\node at (-7,-3.25) {$-L$};
		\node at (7,-3.25) {$L$};
		\draw[black, line width = 1pt, arrows = <->] (-7,-2.75)--(7,-2.75);
		\node at (-1,-3.25) {${\color{red} \mathcal{C}(u)}$};
		\draw (-0.5,-3.25) edge[->,bend right, line width = 1pt] (0.5,-2.55);
		\draw[black, line width = 2pt] (-7,-2.5)--(7,-2.5);
	\end{tikzpicture}
	\caption{Geometry of $\Omega(u)$ for a state $u\in \bar{\mathcal{S}}$ with non-empty coincidence set.}\label{Fig2}
\end{figure}

In this research we shall be interested in minimizers of the total energy $E$ which correspond to stationary states of the MEMS device. More precisely, we shall show the existence of minimizers and derive the corresponding Euler-Lagrange equation they satisfy, which, due to the nature of the problem, is a variational inequality.  Obviously, the main difficulty in this regard is related to the electrostatic energy $E_e$ and the associated transmission problem~\eqref{psi} for the electrostatic potential. The latter was investigated in~\cite{LW21} for deflections belonging to the set
\begin{equation*}
\bar{\mathcal{S}} := \{u\in H^2(D) \cap H_0^1(D)\,:\, u\ge -H \text{ in } D \;\text{ and }\; \pm\llbracket\sigma\rrbracket  \partial_x u(\pm L) \le 0 \}
\end{equation*}
with $\llbracket\sigma\rrbracket =\sigma_1-\sigma_2 $. More precisely, the following result is shown in \cite{LW21}. 

\begin{theorem}{\bf\cite[Theorem 1.1]{LW21}}\label{Thm1}
Suppose \eqref{bobbybrown}. 
\begin{itemize}
\item[(a)] For each $u\in \bar{\mathcal{S}}$, there is a unique variational solution $\psi_u \in h_{u}+H_{0}^1(\Omega(u))$ to \eqref{psi}.  Moreover, $\psi_{u,1}:= \psi_{u}|_{\Omega_1(u)} \in H^2(\Omega_1(u))$ and $\psi_{u,2} := \psi_{u}|_{\Omega_2(u)} \in H^2(\Omega_2(u))$, and $\psi_{u}$  is a strong solution to the transmission problem~\eqref{psi}.
\item[(b)] Given $\kappa>0$, there is $c(\kappa)>0$ such that, for every $u\in\bar{\mathcal{S}}$ satisfying $\|u\|_{H^2(D)}\le \kappa$, 
\begin{equation*}
\|\psi_u\|_{H^1(\Omega(u))} + \|\psi_{u,1}\|_{H^2(\Omega_1(u))} + \|\psi_{u,2}\|_{H^2(\Omega_2(u))} \le c(\kappa)\,.
\end{equation*}	
\end{itemize}
\end{theorem}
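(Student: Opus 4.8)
My plan is to solve \eqref{psi} variationally first and then bootstrap the regularity by flattening the free interface $\Sigma(u)$, in the four steps below.

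\emph{Step 1: variational solution.} Since $u\in H^2(D)\hookrightarrow C^1(\bar D)$ and $\zeta\in C^2(\R)$ is constant outside a compact set, the datum $h_u$ in \eqref{exx1} satisfies $h_u\in W^1_\infty(\Omega(u))\subset H^1(\Omega(u))$, so $h_u+H_0^1(\Omega(u))$ is a nonempty closed affine subset of $H^1(\Omega(u))$. On it the functional $\psi\mapsto\frac12\int_{\Omega(u)}\sigma\,|\nabla\psi|^2\,\rd(x,z)$ is strictly convex, weakly lower semicontinuous and, because $\sigma\ge\min\{\sigma_1,\sigma_2\}>0$ by \eqref{sigma} and $\Omega(u)$ is bounded (so that Poincar\'e's inequality applies to $\psi-h_u\in H_0^1(\Omega(u))$), coercive. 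The direct method yields a unique minimizer $\psi_u$, whose Euler--Lagrange equation $\int_{\Omega(u)}\sigma\,\nabla\psi_u\cdot\nabla\phi\,\rd(x,z)=0$ for all $\phi\in H_0^1(\Omega(u))$ is the variational formulation of \eqref{psi}; uniqueness follows from strict convexity (equivalently, from Lax--Milgram applied on $H_0^1(\Omega(u))$).

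\emph{Step 2: flattening.} The key device is the $C^1$-diffeomorphism $\Theta\colon\Omega_2(u)\to D\times(0,d)$, $\Theta(x,z):=(x,z-u(x))$, which straightens $\Sigma(u)$ to $D\times\{0\}$ and the top of the plate to $D\times\{d\}$, has Jacobian determinant $1$, and transforms \eqref{a1a} on $\Omega_2(u)$ into $\mathrm{div}(\mathcal A\nabla\Psi_2)=0$ on the fixed rectangle, where $\Psi_2:=\psi_{u,2}\circ\Theta^{-1}$ and $\mathcal A=\begin{pmatrix} 1 & -\partial_x u(y_1) \\ -\partial_x u(y_1) & 1+(\partial_x u(y_1))^2 \end{pmatrix}$ is symmetric, uniformly elliptic (with bounds controlled by $\|\partial_x u\|_{L_\infty(D)}\lesssim\|u\|_{H^2(D)}$), and has entries depending on $y_1$ only, lying in $H^1(D)\hookrightarrow C^{1/2}(\bar D)$. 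The transformed datum $h_u\circ\Theta^{-1}(y)=\zeta(y_2+1)$ depends on $y_2$ only, so setting $\Phi_2:=\Psi_2-\zeta(y_2+1)$ gives $\mathrm{div}(\mathcal A\nabla\Phi_2)=f$ on $D\times(0,d)$ with $f\in L_2$ built only from $\partial_x u$, $\partial_x^2 u$ and $\zeta',\zeta''$, while $\Phi_2$ vanishes on the top and on the lateral sides $\{\pm L\}\times(0,d)$ and equals the interface trace $g:=\psi_u|_{\Sigma(u)}\circ\Theta^{-1}$ on $D\times\{0\}$. A careful combination of difference quotients (using that $\mathcal A$ is independent of $y_2$), of the H\"older continuity of $\mathcal A$, of Meyers-type higher integrability of $\nabla\Phi_2$, and of classical interior and flat-boundary estimates then absorbs the $\partial_x^2 u$-contributions and yields $\Psi_2\in H^2$ away from the two corners $\{\pm L\}\times\{0\}$ where $\Sigma(u)$ meets the lateral boundary. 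For $\Omega_1(u)$, a local graph-flattening treats a neighbourhood of $\Sigma(u)$, the flat bottom $D\times\{-H\}$ is standard, and there remain only the corners $\{\pm L\}\times\{0\}$ and, when $\mathcal C(u)\ne\emptyset$, the cuspidal points of $\partial\Omega_1(u)$ lying over $\mathcal C(u)$.

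\emph{Step 3: corners, cusps and coupling.} The estimates on $\Omega_1(u)$ and $\Omega_2(u)$ are coupled through the a priori only $H^{1/2}$ trace $g$; I would close the loop by treating \eqref{psi} as a single transmission problem across the flattened interface and running the tangential-regularity/difference-quotient scheme simultaneously on both sides (equivalently, through the order-one Steklov--Poincar\'e operator on $\Sigma(u)$), which upgrades $g$ to $H^{3/2}$ and, via the two equations, delivers the missing normal second derivatives, so that $\psi_{u,1}\in H^2(\Omega_1(u))$ and $\psi_{u,2}\in H^2(\Omega_2(u))$. At each corner $(\pm L,0)$ one reflects the configuration evenly across $\{x=\pm L\}$, obtaining an interior transmission corner between two media with permittivity ratio $\sigma_1/\sigma_2$; the hypothesis $\pm\llbracket\sigma\rrbracket\,\partial_x u(\pm L)\le 0$ built into $\bar{\mathcal S}$ is exactly what pushes the leading exponent of the associated corner singularities above the $H^2$-threshold, while the vanishing of $\zeta,\zeta',\zeta''$ at $1$ makes $h_u$ and the trace $g$ vanish there to high order and contribute no singularity. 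At a cuspidal point over $\mathcal C(u)$ one has $\partial_x u=0$, and $\psi_{u,1}$ vanishes on $D\times\{-H\}$ and, by continuity of $\psi_u$, at the cusp tip; an odd reflection across $z=-H$ together with a Hardy-type estimate in the thin region supplies the $H^2$-bound there. Once $\psi_{u,i}\in H^2(\Omega_i(u))$, equation \eqref{a1a} holds a.e. in each $\Omega_i(u)$, the transmission relations \eqref{a1b} hold in the trace sense on the $C^1$-curve $\Sigma(u)$, and \eqref{a1c} holds as an equality of $H^1$-traces, so $\psi_u$ is a strong solution.

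\emph{Step 4: the uniform bound.} Every constant in Steps 2--3 depends on $u$ only through $\|u\|_{H^2(D)}$ — which controls $\|u\|_{C^1(\bar D)}$ and hence the geometry of $\Omega_i(u)$, the coefficients $\mathcal A$, the ellipticity and Poincar\'e constants, the corner exponents and the Hardy constants — and through the fixed data $\sigma_1,\sigma_2,H,d,V,\zeta$, which gives $\|\psi_u\|_{H^1(\Omega(u))}+\|\psi_{u,1}\|_{H^2(\Omega_1(u))}+\|\psi_{u,2}\|_{H^2(\Omega_2(u))}\le c(\kappa)$ for $\|u\|_{H^2(D)}\le\kappa$. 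To obtain this uniformly across the degeneracy caused by a nonempty coincidence set, one may first prove it for smooth $u$ with $\mathcal C(u)=\emptyset$, where $\Omega(u)$ is smooth up to four genuine corners and classical elliptic theory applies with constants depending only on $\kappa$, and then, given $u\in\bar{\mathcal S}$ with $\|u\|_{H^2(D)}\le\kappa$, approximate it by such $u_n\to u$ in $H^2(D)$, pull the $\psi_{u_n}$ back to the fixed rectangles, extract weakly $H^2$-convergent subsequences, and identify the limit with the pullback of $\psi_u$ via Step~1. The step I expect to be the main obstacle is Step 3: reconciling the transmission structure of \eqref{psi} with the non-Lipschitz geometry of $\Omega(u)$ — the lateral corners and, when $\mathcal C(u)\ne\emptyset$, the cusps — while keeping genuine $H^2$-regularity, the sign condition $\pm\llbracket\sigma\rrbracket\,\partial_x u(\pm L)\le 0$ being the decisive hypothesis that rescues the corner analysis.
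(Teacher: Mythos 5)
First, a point of order: this paper does not prove Theorem~\ref{Thm1} at all --- it is quoted verbatim from \cite[Theorem~1.1]{LW21}, so there is no in-paper proof to compare against; what you have written has to stand on its own as a proof of the cited result. Your Step~1 does: existence and uniqueness of the variational solution via the direct method/Lax--Milgram on $h_u+H_0^1(\Omega(u))$ is complete and correct. The shear transformation in Step~2, the resulting coefficient matrix $\mathcal A$, and the observation that the transformed datum depends on $y_2$ only are also correct, and the strategy (flatten, difference quotients in the tangential direction, anisotropic/Meyers-type absorption of the $\partial_x^2u$ terms, separate treatment of corners and cusps, approximation to get uniformity) is a reasonable outline of the kind of argument such a result requires.

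The problem is that everything past that point is assertion rather than proof, and the assertions are exactly the substance of the theorem. (i)~At the corners $(\pm L,0)$ you claim that reflection across $\{x=\pm L\}$ produces an interior two-material corner whose leading singular exponent exceeds the $H^2$-threshold ``exactly because'' $\pm\llbracket\sigma\rrbracket\,\partial_x u(\pm L)\le 0$; this is the decisive step and you never compute the exponent, nor do you address that $\psi_u$ satisfies an inhomogeneous Dirichlet condition on the lateral walls (so even reflection is not available and the reflected interface has a kink of angle determined by $\partial_x u(\pm L)$, whose transmission eigenvalue problem must actually be analysed). (ii)~At the cusps over $\mathcal{C}(u)$, ``odd reflection across $z=-H$ together with a Hardy-type estimate'' is not an argument: $\Omega_1(u)$ is not Lipschitz there, the transmission interface itself reaches the bottom plate, and obtaining $\psi_{u,1}\in H^2(\Omega_1(u))$ in that geometry is precisely what makes \cite[Theorem~1.1]{LW21} nontrivial. (iii)~The uniformity in Step~4 is likewise begged: when you approximate $u$ by functions with empty coincidence set, the natural flattening of $\Omega_1$ degenerates as the gap closes, and the claim that all constants (Poincar\'e, corner, ``Hardy'') depend only on $\|u\|_{H^2(D)}\le\kappa$ is exactly the uniform estimate of part~(b), not a consequence of it; weak-$H^2$ compactness on the pulled-back rectangles cannot even be invoked until such $\kappa$-uniform $H^2$ bounds are established for the approximations. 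So the proposal should be regarded as a plausible programme whose three critical steps --- corner exponents, cusp regularity, and uniformity under degeneration of the gap --- remain open, which is why the present paper imports the result from \cite{LW21} rather than reproving it.
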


The $H^2$-regularity of the electrostatic potential $\psi_u$ provided by \Cref{Thm1} is then the basis for deriving the existence of minimizers of the total energy $E$. We shall look for minimizers with clamped boundary conditions; that is, minimizers in the closed convex subset
\begin{equation*}
	\bar{\mathcal{S}}_0 := \{u\in H^2(D) \cap H_0^1(D)\,:\, u\ge -H \text{ in } D \;\text{ and }\;  \partial_x u(\pm L)= 0 \}
\end{equation*}
of $H^2(D)$. We denote by $\partial\mathbb{I}_{\bar{\mathcal{S}}_0}$ the subdifferential of the indicator function $\mathbb{I}_{\bar{\mathcal{S}}_0}$.  Our main result then reads: 

\begin{theorem}\label{Thm3}
	Assume $a>0$ or~$\llbracket\sigma\rrbracket<0$, and let \eqref{bobbybrown} be satisfied. Then, the total energy~$E$ has at least one minimizer in $\bar{\mathcal{S}}_0$. Moreover, any minimizer $u_*\in \bar{\mathcal{S}}_0$ of $E$ in $\bar{\mathcal{S}}_0$ with
\begin{equation}\label{B}
E(u_*)=\min_{\bar{\mathcal{S}}_0}E
\end{equation}
is an $H^2$-weak solution to the variational inequality
\begin{equation}
\beta\partial_x^4u_*-(\tau+a\|\partial_x u_*\|_{L_2(D)}^2)\partial_x^2 u_*+\partial\mathbb{I}_{\bar{\mathcal{S}}_0}(u_*) \ni -g(u_*) \;\;\text{ in }\;\; D\,; \label{bennygoodman}
\end{equation}
 that is, 
$$
\int_D \left\{\beta\partial_x^2 u_*\,\partial_x^2 (w-u_*)+\big[\tau+a\|\partial_x u_*\|_{L_2(D)}^2\big]\partial_x u_*\, \partial_x(w-u_*)\right\}\,\rd x\ge -\int_D g(u_*) (w-u_*)\, \rd x  
$$
for all $w\in {\bar{\mathcal{S}}_0}$. The function $g(u)\in L_2(D)$  is for $u\in \bar{\mathcal{S}}$ given by
	\begin{equation}\label{g}
	\begin{split}
		g(u):=\, & -\frac{\llbracket\sigma\rrbracket}{2(1+(\partial_x u(x))^2)} \big(\partial_x\psi_{u,2}+\partial_x u\partial_z\psi_{u,2}\big)^2 (x,u(x))\\
		&-\frac{\llbracket\sigma\rrbracket\sigma_2}{2\sigma_1(1+(\partial_x u(x))^2)}\big(\partial_x u\partial_x\psi_{u,2}-\partial_z\psi_{u,2}\big)^2 (x,u(x))\\
		&+\frac{\sigma_2}{2}\, \big\vert\nabla\psi_{u,2}(x,u(x)+d)\big\vert^2\,.
	\end{split}
\end{equation}
Finally, if $\llbracket\sigma\rrbracket<0$, then $u_*\le 0$ in $D$.
\end{theorem}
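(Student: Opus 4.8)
The plan is to prove \Cref{Thm3} in four stages: existence of a minimizer, derivation of the Euler-Lagrange inequality, identification of the boundary term $g(u)$, and the sign property under $\llbracket\sigma\rrbracket<0$.

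\textbf{Existence of a minimizer.} First I would establish coercivity of $E$ on $\bar{\mathcal{S}}_0$. The mechanical energy $E_m$ controls $\|\partial_x^2 u\|_{L_2(D)}^2$ from below through the bending term, and combined with the Poincar\'e inequality on $H^2(D)\cap H_0^1(D)$ this dominates $\|u\|_{H^2(D)}^2$; the stretching terms are nonnegative, so they only help. The electrostatic energy $E_e$ is negative but, by \Cref{Thm1}(b), it is bounded below on sublevel sets $\{\|u\|_{H^2(D)}\le\kappa\}$ by a constant depending only on $\kappa$; hence $E(u)\ge c_1\|u\|_{H^2(D)}^2 - c_2(\|u\|_{H^2(D)})$ with $c_2$ growing slower than quadratically (in fact $c_2$ is merely bounded on bounded sets, which suffices here because $E_e$ is itself uniformly bounded below once $\kappa$ is fixed — one needs to check that the crude bound $|E_e(u)|\le c(\kappa)^2$ blows up slower than $\|u\|^2$, which it does since it is constant on each sublevel set). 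So $E$ is coercive and bounded below on $\bar{\mathcal{S}}_0$. Take a minimizing sequence $(u_k)$; it is bounded in $H^2(D)$, hence (after passing to a subsequence) converges weakly in $H^2(D)$ and strongly in $C^1(\bar D)$ to some $u_*\in\bar{\mathcal{S}}_0$ (the set $\bar{\mathcal{S}}_0$ is weakly closed, being convex and closed). Then $E_m$ is weakly lower semicontinuous (convexity, or direct lsc of norms under weak convergence), and the continuity of $E_e$ with respect to this convergence — which follows from the uniform $H^2$-bounds on $\psi_{u_k,i}$ in \Cref{Thm1}(b) together with a domain-perturbation/compactness argument showing $\psi_{u_k}\to\psi_{u_*}$ in the relevant sense (this is presumably established in an earlier section as a continuity property of $u\mapsto E_e(u)$) — gives $E(u_*)\le\liminf E(u_k)=\min_{\bar{\mathcal{S}}_0}E$, so $u_*$ is a minimizer. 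The main obstacle in this step is the continuity of $E_e$; I expect the paper to have prepared this via the shape-derivative analysis, and more concretely via a locally Lipschitz or at least continuous dependence $u\mapsto E_e(u)$ on $\bar{\mathcal{S}}$ equipped with the $H^2$-weak topology on bounded sets.

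\textbf{Euler-Lagrange inequality.} Let $u_*$ be a minimizer and fix $w\in\bar{\mathcal{S}}_0$. Since $\bar{\mathcal{S}}_0$ is convex, $u_*+t(w-u_*)\in\bar{\mathcal{S}}_0$ for $t\in[0,1]$, and $\varphi(t):=E(u_*+t(w-u_*))$ has a minimum at $t=0$, so $\varphi'(0^+)\ge 0$ provided the one-sided derivative exists. The derivative of $E_m$ along this direction is the explicit quadratic-form expression
$$
\int_D\big\{\beta\,\partial_x^2 u_*\,\partial_x^2(w-u_*)+[\tau+a\|\partial_x u_*\|_{L_2(D)}^2]\,\partial_x u_*\,\partial_x(w-u_*)\big\}\,\rd x,
$$
obtained by straightforward differentiation of \eqref{E1} (the quartic stretching term differentiates to the factor $a\|\partial_x u_*\|_{L_2(D)}^2$ shown). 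The derivative of $E_e$ is, by the representation formula for the shape derivative of the electrostatic energy established earlier in the paper (the "representation formula for the derivative of the associated electrostatic energy with respect to the deflection" mentioned in the abstract), exactly $\int_D g(u_*)(w-u_*)\,\rd x$ with $g$ as in \eqref{g}; here one must check that this formula, presumably proved for smooth perturbations or for $u$ with empty coincidence set, extends to all $u_*\in\bar{\mathcal{S}}_0$ and all admissible directions $w-u_*$, including the case of a nonempty coincidence set where $\Omega_1(u_*)$ has cusps. This is the second place I expect technical difficulty: one needs the directional differentiability of $u\mapsto E_e(u)$ at a general $u_*\in\bar{\mathcal{S}}_0$ in directions pointing into the admissible set, together with the $L_2$-bound $g(u_*)\in L_2(D)$ (which follows from $\psi_{u_*,2}\in H^2(\Omega_2(u_*))$ and the trace theorem, since $\Omega_2(u_*)$ is a Lipschitz domain even when $\Omega_1(u_*)$ is not). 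Combining, $\varphi'(0^+)\ge 0$ is precisely the claimed variational inequality, which is the weak form of \eqref{bennygoodman}.

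\textbf{The sign property when $\llbracket\sigma\rrbracket<0$.} Assume $\llbracket\sigma\rrbracket=\sigma_1-\sigma_2<0$. The strategy is a truncation/comparison argument: given the minimizer $u_*$, consider $w:=u_*^-=\min\{u_*,0\}$, i.e. replace $u_*$ by its negative part (note $w\ge -H$ since $u_*\ge -H$, $w(\pm L)=0$, and $\partial_x w(\pm L)=0$ because near the clamped ends $u_*$ vanishes to first order; one should verify $w\in H^2(D)$, which holds because $u_*\in H^2(D)$ and, at the single level set $\{u_*=0\}$, either $u_*$ has vanishing gradient there or the set is at most countable — the standard argument for $H^2$-stability under truncation at a level where the function is $C^1$ applies). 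Then $w\in\bar{\mathcal{S}}_0$, so the variational inequality with this $w$ gives an inequality involving $w-u_*=-u_*^+=-\max\{u_*,0\}$. On the set $\{u_*>0\}$ one computes $\partial_x^2 u_*\,\partial_x^2(w-u_*)=-(\partial_x^2 u_*)^2$ and $\partial_x u_*\,\partial_x(w-u_*)=-(\partial_x u_*)^2$ a.e., so the mechanical part of the left-hand side equals $-\int_{\{u_*>0\}}\{\beta(\partial_x^2 u_*)^2+[\tau+a\|\partial_x u_*\|^2](\partial_x u_*)^2\}\,\rd x\le 0$. For the right-hand side one needs $-\int_D g(u_*)(w-u_*)\,\rd x=\int_{\{u_*>0\}}g(u_*)\,u_*\,\rd x$, and the key point is that $g(u_*)\ge 0$ wherever $u_*>0$ (indeed wherever $u_*>-H$): when $\llbracket\sigma\rrbracket<0$, the first two terms of \eqref{g} are manifestly nonnegative (they are $-\llbracket\sigma\rrbracket$ times a sum of squares, up to the positive factor $\sigma_2/\sigma_1$), and the third term $\tfrac{\sigma_2}{2}|\nabla\psi_{u_*,2}|^2\ge 0$ always. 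Hence the right-hand side is $\ge 0$ while the left-hand side is $\le 0$, forcing both to vanish; in particular $\int_{\{u_*>0\}}\beta(\partial_x^2 u_*)^2\,\rd x=0$. Since also $u_*^+\in H^2(D)\cap H_0^1(D)$ with $\partial_x^2 u_*^+=\mathbf{1}_{\{u_*>0\}}\partial_x^2 u_*$, this yields $\partial_x^2 u_*^+=0$, hence $u_*^+$ is affine, and with the homogeneous boundary data $u_*^+(\pm L)=0$ we conclude $u_*^+\equiv 0$, i.e. $u_*\le 0$ in $D$. The main obstacle here is the careful verification that $u_*^-\in\bar{\mathcal{S}}_0$ — in particular the $H^2$-regularity and the clamped boundary condition $\partial_x u_*^-(\pm L)=0$ — and the pointwise sign analysis of $g$, which reduces to reading off the signs of the three terms in \eqref{g}.
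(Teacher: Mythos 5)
There are two genuine gaps. First, your existence argument rests on a coercivity claim that does not hold as stated: from \Cref{Thm1}(b) you only know $|E_e(u)|\le c(\kappa)$ on each sublevel set $\{\|u\|_{H^2(D)}\le\kappa\}$, with no control on how $c(\kappa)$ grows in $\kappa$, so the inequality $E(u)\ge c_1\|u\|_{H^2(D)}^2-c(\|u\|_{H^2(D)})$ gives nothing as $\|u\|_{H^2(D)}\to\infty$; the parenthetical ``it is constant on each sublevel set'' is circular. This is precisely why the theorem assumes ``$a>0$ \emph{or} $\llbracket\sigma\rrbracket<0$'': for $a>0$ the quartic stretching term yields $H^2$-coercivity (as in \cite{ARMA20}), while for $a=0$ the paper does \emph{not} prove coercivity at all — it adds a penalization term to force coercivity, derives the Euler--Lagrange inequality for the penalized minimizers (with an extra term), and then uses the sign $g(u)\ge 0$ (available only because $\llbracket\sigma\rrbracket<0$) in a comparison argument to obtain an a priori bound showing the penalized minimizers actually minimize $E$, following \cite{CVPDE22}. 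Your proposal proves ``existence'' without using either hypothesis, which cannot be right, and it misses the penalization idea that is the actual content of the $a=0$ case.

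Second, your truncation argument for $u_*\le 0$ fails at the admissibility of the test function: $w=\min\{u_*,0\}$ is in general \emph{not} in $H^2(D)$. At a transversal zero crossing ($u_*(x_0)=0$, $\partial_x u_*(x_0)\ne 0$) the truncation has a corner and its distributional second derivative contains a Dirac mass, so $w\notin H^2(D)\supset\bar{\mathcal{S}}_0$; there is no ``standard $H^2$-stability under truncation'' to invoke (truncation preserves $H^1$, not $H^2$), and countability of $\{u_*=0\}$ is irrelevant. Hence the variational inequality cannot be tested with this $w$, and the conclusion $u_*\le 0$ does not follow by your route; in the paper it is obtained through the comparison argument of \cite{CVPDE22}, where the sign of $g$ enters at the level of the (penalized) Euler--Lagrange inequality rather than through an $H^2$ truncation. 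A smaller point: \Cref{C17}/\Cref{C17c} give the directional derivative of $E_e$ at $u_*\in\bar{\mathcal{S}}$ only in directions $w-u_*$ with $w\in\mathcal{S}$, so to get the inequality for all $w\in\bar{\mathcal{S}}_0$ you should approximate, e.g.\ by $(1-\epsilon)w\in\mathcal{S}$, and pass to the limit using the $L_p$-continuity of $g$; your sketch elides this step.
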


Even though the total energy $E$ consists of two competing terms with different signs, it is not difficult to see that it is $H^2$-coercive if $a>0$ in \eqref{E1}, see \cite{ARMA20}, and the existence of a minimizer for $E$ on $\bar{\mathcal{S}}_0$ follows directly. When $a=0$, the coercivity of $E$ is no longer obvious and one has to proceed differently. In this case, the coercivity of the functional can be enforced by adding a penalty term which vanishes when $u$ is bounded, an idea that was used in~\cite{CVPDE22}. The  minimizers of the penalized energy functional on $\bar{\mathcal{S}}_0$ then satisfy the Euler-Lagrange equation~\eqref{bennygoodman} with an additional term. The assumption $\llbracket\sigma\rrbracket<0$ now guarantees that $g(u)\ge 0$ in $D$ according to \eqref{g} which, in turn, yields an a priori bound on the minimizers by a comparison argument. This then implies that the minimizers of the penalized energy actually minimize the total energy $E$. It is worth emphasizing that the non-negative sign of $g(u)$ --~read off from the explicit formula~\eqref{g} when $\llbracket\sigma\rrbracket<0$~-- is essential for this approach.

The main motivation of this research is thus the explicit computation of the electrostatic force $g(u)$  as the (directional) derivative of the electrostatic energy $E_e(u)$. A computation in the same spirit was performed in~\cite{ARMA20} for a related MEMS model but with a flat transmission interface. As we shall see in \Cref{sec2}, the  non-flat transmission interface~$\Sigma(u)$ in~\eqref{a1b} makes the computation noticeably more involved. We first establish in \Cref{sec2} differentiability properties of the electrostatic potential $\psi_u$ with respect to $u$ which then ensure the Fr\'echet differentiability of the electrostatic energy $E_e$ on $\mathcal{S}$. The subsequent identification of $g(u)$ as the (directional) derivative of the electrostatic energy $E_e(u)$ is the main contribution of \Cref{sec2}. Once this is achieved, the existence of minimizers  follows along the lines of~\cite{CVPDE22} as described above.

As pointed out above the electrostatic force $g(u)$ has a sign if one assumes that $\llbracket\sigma\rrbracket<0$; that is, if $\sigma_2> \sigma_1$. This is a natural assumption e.g. if the region between the two plates is vacuumed or filled with air. We also point out that this assumption implies the monotonicity of the electrostatic energy $E_e$ as stated explicitly in  \Cref{C17c}.

\begin{remark}
The total energy $E$ can also be minimized in $\bar{\mathcal{S}}$ leading then to weak solutions to~\eqref{bennygoodman} with $\mathbb{I}_{\bar{\mathcal{S}}}$ instead of  $\mathbb{I}_{\bar{\mathcal{S}}_0}$ and pinned boundary conditions $u(\pm L) = \partial_x^2 u(\pm L)=0$ instead of the clamped boundary conditions involved in $\bar{\mathcal{S}}_0$ .
\end{remark}

\section{Shape Derivative of the Electrostatic Energy}\label{sec2}

The heart of the proof of \Cref{Thm3} is the differentiability of the electrostatic energy~$E_e$ and, in particular, the identification of $g(u)$ as its  derivative at $u\in{\bar{\mathcal{S}}}$.  On a formal level, this derivative is computed in \cite{LW18} (in a three-dimensional setting). Here we provide a rigorous proof.
Actually, we shall show that the electrostatic energy $E_e$  is   Fr\'echet differentiable  on
\begin{equation*}
	\mathcal{S} := \{u\in H^2(D) \cap H_0^1(D)\,:\, u> -H \text{ in } D \;\text{ and }\; \pm\llbracket\sigma\rrbracket  \partial_x u(\pm L) \le 0 \}\,,
\end{equation*}
i.e., for points with empty  coincidence set, while it admits a directional derivative at $u\in {\bar{\mathcal{S}}}$ in the directions $-u+\mathcal{S}$. Here and in the following, $\mathcal{S}$ and $\bar{\mathcal{S}}$ are endowed with the $H^2(D)$-topology. The precise result reads as follows:

\begin{theorem}\label{C17}
Assume \eqref{bobbybrown}. The electrostatic energy $E_e:\mathcal{S}\rightarrow\R$ is continuously Fr\'echet differentiable with
	$$
	\partial_uE_e(u)[\vartheta]= \int_D g(u)(x)\,\vartheta(x)\,\rd x
	$$
	for $u\in \mathcal{S}$ and $\vartheta\in H^2(D)\cap H_0^1(D)$, where $g(u)$ is defined in~\eqref{g}.
	Moreover, if $u\in\bar{\mathcal{S}}$ and $w\in \mathcal{S}$, then
	\begin{equation*}
		\begin{split}
			\lim_{t\rightarrow 0^+} \frac{1}{t}\big(E_e(&u+t(w-u))-E_e(u)\big)= \int_D g(u)(x)\,(w-u)(x)\,\rd x\,.
		\end{split}
	\end{equation*}
	The function $g:\bar{\mathcal{S}}\rightarrow L_p(D)$ is continuous for each $p\in [1,\infty)$.
\end{theorem}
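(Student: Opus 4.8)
The plan is to move everything onto the \emph{fixed} domain $\Omega(u)$, to exploit the variational characterization of $\psi_u$ so as to obtain the G\^ateaux derivative of $E_e$ without ever computing the shape derivative of the potential, and only then to perform the explicit integration by parts that reveals the boundary expression~\eqref{g}. Fix $u\in\mathcal S$ and $\vartheta\in H^2(D)\cap H_0^1(D)$, so that $u_t:=u+t\vartheta\in\mathcal S$ for $|t|$ small. I would introduce the diffeomorphism $\Phi_t\colon\Omega(u)\to\Omega(u_t)$ which translates the plate rigidly, $\Phi_t(x,z)=(x,z+t\vartheta(x))$ on $\Omega_2(u)$, and stretches the lower region, $\Phi_t(x,z)=\big(x,\,z+t\vartheta(x)(z+H)/(u(x)+H)\big)$ on $\Omega_1(u)$; this is well defined precisely because $u>-H$ on $\bar D$, which is why Fr\'echet differentiability can only be expected on $\mathcal S$. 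The key observation is that, by the translation-invariant form~\eqref{bobbybrown} of the boundary data and $\vartheta(\pm L)=0$, one has $h_{u_t}\circ\Phi_t=h_u$ on $\bar\Omega(u)$; hence $\phi_t:=\psi_{u_t}\circ\Phi_t$ lies in the \emph{$t$-independent} affine space $h_u+H_0^1(\Omega(u))$ for every $t$ and, by the transformation rule (and $\sigma\circ\Phi_t=\sigma$), is the unique minimizer over that space of
\[
J_t(v):=\frac12\int_{\Omega(u)}\sigma\,\mathbb A_t\,\nabla v\cdot\nabla v\,\rd(x,z),\qquad
\mathbb A_t:=|\det D\Phi_t|\,(D\Phi_t)^{-1}(D\Phi_t)^{-\mathrm T},
\]
with $E_e(u_t)=-J_t(\phi_t)$. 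The matrix field $\mathbb A_t$ is smooth in $t$ near $0$, equals the identity at $t=0$, and expands as $\mathbb A_t=\mathrm{id}+t\,\mathbb B+o(t)$ in $L_\infty(\Omega(u))$ with $\mathbb B$ piecewise explicit and genuinely different on $\Omega_1(u)$ and $\Omega_2(u)$.

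Since $\phi_t$ minimizes $J_t$ and $\psi_u=\phi_0$ minimizes $J_0$ over the \emph{same} set, I would then simply squeeze:
\[
-(J_t-J_0)(\psi_u)\ \le\ E_e(u_t)-E_e(u)\ =\ J_0(\psi_u)-J_t(\phi_t)\ \le\ -(J_t-J_0)(\phi_t),
\]
divide by $t$, and use that $(J_t-J_0)(v)=\tfrac t2\int_{\Omega(u)}\sigma\,\mathbb B\,\nabla v\cdot\nabla v\,\rd(x,z)+o(t)\,\|v\|_{H^1}^2$ uniformly on $H^1$-bounded sets. The only input still needed is $\phi_t\to\psi_u$ in $H^1(\Omega(u))$ as $t\to0$, which I would obtain from the uniform bounds and uniqueness in Theorem~\ref{Thm1} together with the uniform coercivity of $J_t$ and the standard passage from weak to strong convergence via convergence of energies. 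Both sides of the squeeze then converge to the same limit, giving G\^ateaux differentiability at $u$ with
\[
\partial_uE_e(u)[\vartheta]=-\frac12\int_{\Omega(u)}\sigma\,\mathbb B\,\nabla\psi_u\cdot\nabla\psi_u\,\rd(x,z).
\]

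The heart of the argument, and the step I expect to be the main obstacle, is to rewrite this last integral as $\int_D g(u)\,\vartheta\,\rd x$. I would insert the explicit $\mathbb B$ on each strip, integrate by parts in $(x,z)$, and use $\mathrm{div}(\sigma\nabla\psi_u)=0$ so that only boundary and interface contributions survive. The contribution on $\{z=-H\}$ vanishes because the pulled-back velocity field is tangential there, and the lateral contributions vanish since $\vartheta(\pm L)=0$; the contribution from the top $\{z=u(x)+d\}$ yields the last term of~\eqref{g}, while the contribution from $\Sigma(u)$, after using the transmission conditions $\llbracket\psi_u\rrbracket=\llbracket\sigma\nabla\psi_u\rrbracket\cdot\mathbf n_{\Sigma(u)}=0$ to express the $\Omega_1(u)$-traces of $\nabla\psi_u$ on $\Sigma(u)$ through $\nabla\psi_{u,2}$, collapses exactly to the first two terms of~\eqref{g}. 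It is the non-flat graph $\Sigma(u)$ that makes this bookkeeping delicate, since tangential/normal splittings now carry the factors $\partial_x u$ and $\sqrt{1+(\partial_x u)^2}$; the $H^2$-regularity of $\psi_{u,1},\psi_{u,2}$ from Theorem~\ref{Thm1}(a) is precisely what makes all the traces and integrations by parts legitimate.

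Finally, for the continuity of $g\colon\bar{\mathcal S}\to L_p(D)$, $p\in[1,\infty)$, I would transport $\Omega_2(u)=\{u<z<u+d\}$ onto the fixed strip $D\times(0,d)$ via $(x,z)\mapsto(x,z-u(x))$, use the bound of Theorem~\ref{Thm1}(b) and the continuity of the transported coefficients (via $H^2(D)\hookrightarrow C^1(\bar D)$) to deduce continuous dependence of the transported potential in $H^2(D\times(0,d))$; then $\nabla\psi_{u,2}$ has traces on $\Sigma(u)$ and on $\{z=u+d\}$ in $L_q$ for every $q<\infty$, depending continuously on $u\in\bar{\mathcal S}$, and since $g(u)$ is quadratic in these traces the claim follows. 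In particular $u\mapsto\partial_uE_e(u)$ is continuous from $\mathcal S$ into $(H^2(D)\cap H_0^1(D))'$, hence $E_e\in C^1(\mathcal S)$. For the one-sided directional derivative, given $u\in\bar{\mathcal S}$ and $w\in\mathcal S$ I would note that $u+r(w-u)\in\mathcal S$ for all $r\in(0,1]$, apply the fundamental theorem of calculus along this segment on $[s,t]\subset(0,1]$, and then let $s\to0^+$ (using continuity of $E_e$ and of $g$ up to $\bar{\mathcal S}$, together with dominated convergence, since $w-u\in L_2(D)$) and $t\to0^+$ to conclude $\lim_{t\to0^+}\tfrac1t\big(E_e(u+t(w-u))-E_e(u)\big)=\int_D g(u)(w-u)\,\rd x$.
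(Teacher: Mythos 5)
Your proposal is essentially correct, but it takes a genuinely different route at the crucial differentiability step. The paper (Lemma~\ref{P316}) proves, via the implicit function theorem applied to $F(v,\xi)=-\mathrm{div}(A(v)\nabla\xi)-B(v)$, that the transported potential $\xi_v=(\psi_v-h_v)\circ\Theta_{u,v}$ is $C^1$ in $v$, and then differentiates $E_e(v)=-\tfrac12\int_{\Omega(u)}\sigma|j(v)|^2J_v$ by the chain rule; the term containing $\partial_v\xi_v$ (the paper's $I_0$) is then shown to vanish using the weak formulation and the transmission conditions. You avoid differentiating the potential altogether: exploiting that $h_{u_t}\circ\Phi_t=h_u$ (which is indeed true, since the map is a vertical translation by $t\vartheta$ on $\Omega_2(u)$ and both sides vanish on $\Omega_1(u)$), you characterize $\phi_t=\psi_{u_t}\circ\Phi_t$ as the minimizer of $J_t$ over the fixed affine space and run a Danskin/envelope squeeze, which only requires $\phi_t\to\psi_u$ strongly in $H^1(\Omega(u))$ (obtainable as you say, or from \eqref{o3}). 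This is a legitimate and arguably more elementary alternative to Lemma~\ref{P316}; what it buys is that the analogue of $I_0$ never appears, what it costs is that you only get a G\^ateaux derivative and must then invoke convexity of $\mathcal{S}$ plus continuity of $g$ (your transport-to-a-fixed-strip argument, or directly \eqref{o1}) to upgrade to continuous Fr\'echet differentiability, which is fine. Your volume formula $-\tfrac12\int\sigma\,\mathbb{B}\,\nabla\psi_u\cdot\nabla\psi_u$ with $\mathbb{B}=(\mathrm{div}\,W)\,\mathrm{id}-DW-DW^{\mathrm T}$ is equivalent to the paper's \eqref{D6} with $I_0=0$, and your description of where the three terms of \eqref{g} come from (top boundary, interface after eliminating the $\Omega_1$-traces via $\llbracket\psi_u\rrbracket=\llbracket\sigma\nabla\psi_u\rrbracket\cdot\mathbf{n}_{\Sigma(u)}=0$, vanishing at $z=-H$ and $x=\pm L$) matches the paper's computation of $I_1$, $I_2$ and the $F_u$, $G_u$ jump algebra. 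The treatment of the one-sided derivative at $u\in\bar{\mathcal{S}}$ (segment in $\mathcal{S}$, fundamental theorem of calculus, continuity of $g$ and of $E_e$) is the same as Corollary~\ref{C17c}.

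Two caveats. First, the heart of the result --- the explicit integration by parts on $\Omega_1(u)$ and $\Omega_2(u)$ and the interface bookkeeping that turns the volume expression into \eqref{g} --- is only outlined in your proposal; this is most of the content of Proposition~\ref{C15}, and you would need to carry it out (the $H^2$-regularity of Theorem~\ref{Thm1} indeed legitimizes all traces, as you note). Second, for directions $\vartheta$ with $u+t\vartheta\notin\mathcal{S}$ (the corner sign constraints may be active when $\llbracket\sigma\rrbracket\neq0$), you should either restrict to admissible increments, as the paper implicitly does, or observe that your squeeze only uses the variational characterization of $\psi_{u_t}$, which persists whenever $u_t>-H$; as written, the phrase ``$u_t\in\mathcal{S}$ for $|t|$ small'' is not literally correct.
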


The proof of \Cref{C17} follows from \Cref{C15} and \Cref{C17c} below. 
We will need the following result which is contained in \cite{LW21}.

\begin{proposition}{\bf\cite[Theorem~1.3, Proposition~3.3]{LW21}}\label{Thm2}
	Assume \eqref{bobbybrown}. 
	Let $u\in \bar{\mathcal{S}}$ and consider a bounded sequence $(u_n)_{n\ge 1}$ in $\bar{\mathcal{S}}$ such that 
	\begin{equation*}
		\lim_{n\to\infty} \|u_n-u\|_{H^1(D)} = 0\,. 
	\end{equation*}
	Then, for any $p\in [1,\infty)$, 
	\begin{subequations}\label{o1}
		\begin{align}
				\lim_{n\to\infty} \big\|\nabla\psi_{u_n,2}(\cdot,u_n) - \nabla\psi_{u,2}(\cdot,u) \big\|_{L_p(D,\R^2)} & = 0\,, \\	
				\lim_{n\to\infty} \big\| \nabla\psi_{u_n,2}(\cdot,u_n+d) - \nabla\psi_{u,2}(\cdot,u+d) \big\|_{L_p(D,\R^2)}  & = 0 \,.
			\end{align}
	\end{subequations}
	Moreover,
	\begin{equation} \label{o2}
		\lim_{n\to\infty}E_e(u_n)=E_e(u)\,.
	\end{equation}
	Finally, setting
	$$
	M := d + \max\big\{ \|u\|_{L_\infty(D)} \,,\, \sup_{n\ge 1}\{\|u_n\|_{L_\infty(D)}\}  \big\}\,, 
	$$
	one has
	\begin{equation} \label{o3}
		\lim_{n\to\infty} \left\| (\psi_{u_n}-h_{u_n}) - (\psi_u - h_u) \right\|_{H_0^1(D\times (-H,M))} = 0\,.
	\end{equation}
\end{proposition}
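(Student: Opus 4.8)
The plan is to transfer the three statements to a \emph{fixed} reference domain through a non-degenerate change of variables, exploiting two facts that come for free from the hypotheses. First, since $(u_n)$ is bounded in $H^2(D)$ and converges to $u$ in $H^1(D)$, the compact embedding $H^2(D)\hookrightarrow C^1(\bar D)$ (available in one space dimension) together with uniqueness of the limit forces $u_n\to u$ in $C^1(\bar D)$. Second, \Cref{Thm1}(b) provides the uniform bounds $\|\psi_{u_n,i}\|_{H^2(\Omega_i(u_n))}\le c(\kappa)$, which in two dimensions yield uniform bounds on $\nabla\psi_{u_n,i}$ in $L_q$ for every $q<\infty$ (an \emph{equi-integrability} that will be crucial). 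I flatten the \emph{top} boundary only, via
\[
\Theta_n(x,z):=\Big(x,\,-H+(H+d)\frac{z+H}{u_n(x)+H+d}\Big),
\]
which maps $\Omega(u_n)$ onto the fixed rectangle $\mathcal R:=D\times(-H,d)$. Because $u_n\ge -H$, the Jacobian of $\Theta_n$ stays bounded away from $0$ and $\infty$ uniformly in $n$, so $\Theta_n$ is non-degenerate \emph{even when the coincidence set of $u_n$ or of $u$ is non-empty}: the pinching of $\Omega_1(u_n)$ is pushed into the harmless fact that the transported interface $\{y=y_n(x)\}$, with $y_n(x):=-H+(H+d)(u_n(x)+H)/(u_n(x)+H+d)$, touches the bottom $\{y=-H\}$ on the coincidence set. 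Setting $\Phi_n:=\psi_{u_n}\circ\Theta_n^{-1}$, $H_n:=h_{u_n}\circ\Theta_n^{-1}$, and $\sigma_n:=\sigma_1\mathbf 1_{\{y<y_n\}}+\sigma_2\mathbf 1_{\{y>y_n\}}$, the transmission problem \eqref{psi} becomes the single divergence-form problem $\mathrm{div}(\sigma_n A_n\nabla\Phi_n)=0$ on $\mathcal R$ with $\Phi_n-H_n\in H_0^1(\mathcal R)$, where $A_n$ is a symmetric, uniformly elliptic matrix built from $\Theta_n$. The convergence $u_n\to u$ in $C^1(\bar D)$ gives $A_n\to A_u$ in $C(\bar{\mathcal R})$, $y_n\to y_\infty$ in $C(\bar D)$, $H_n\to H_u$ in $H^1(\mathcal R)$, and $\sigma_n\to\sigma_u$ in $L_p(\mathcal R)$ for every $p<\infty$ (the symmetric difference of the two phases has measure $O(\|y_n-y_\infty\|_\infty)\to0$).

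For \eqref{o2} and \eqref{o3} I work with this transformed problem. The transported uniform bound gives $\Phi_n-H_n$ bounded in $H_0^1(\mathcal R)$; extracting a weakly convergent subsequence $\Phi_n\rightharpoonup\Phi_\infty$ in $H^1(\mathcal R)$, I pass to the limit in the weak formulation. The only delicate point is the product $\int\sigma_n A_n\nabla\Phi_n\cdot\nabla\varphi$: here $A_n\to A_u$ uniformly while $\sigma_n$ converges only in $L_p$, but since $\sigma_n$ differs from $\sigma_u$ only on a strip of width $\to0$ and $\nabla\Phi_n$ is bounded in $L_q$ with $q>2$, the contribution of that strip vanishes, so the limit equation is $\mathrm{div}(\sigma_u A_u\nabla\Phi_\infty)=0$; the boundary data pass to the limit because $H_n\to H_u$. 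Thus $\Phi_\infty=\psi_u\circ\Theta_u^{-1}$, and uniqueness in \Cref{Thm1}(a) promotes this to convergence of the full sequence. Energy convergence \eqref{o2} then follows from the identity $E_e=-\tfrac12\int\sigma_n A_n|\nabla\Phi_n|^2$ by a $\Gamma$-convergence-type sandwich: lower semicontinuity (again using the equi-integrability of $|\nabla\Phi_n|^2$ to absorb the moving jump) gives the $\liminf$ inequality, while testing the minimality of $\Phi_n$ against the admissible competitor $\Phi_\infty+(H_n-H_u)$ gives the matching $\limsup$. Equality of the limiting energies upgrades the weak convergence of $\nabla\Phi_n$ to strong convergence in $L_2(\mathcal R)$, which, transported back, is exactly \eqref{o3}.

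The gradient-trace statement \eqref{o1} needs genuine \emph{second-order} convergence, and for this I use that $\Omega_2(u_n)$ has constant thickness $d$: the shear $(x,z)\mapsto(x,z-u_n(x))$ maps it onto the \emph{fixed} rectangle $R_2:=D\times(0,d)$, with both the interface ($\{y=0\}$) and the top ($\{y=d\}$) now stationary. The transported potentials $\hat\psi_{n,2}$ solve a uniformly elliptic equation on $R_2$ and are bounded in $H^2(R_2)$ by \Cref{Thm1}(b); a weakly $H^2$/strongly $H^1$ convergent subsequence has limit $\psi_{u,2}$ transported (identified via the limit equation and the $H^1$-trace convergence on $\{y=0\}$ already furnished by \eqref{o3}), and uniqueness gives convergence of the whole sequence. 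I then upgrade to \emph{strong} $H^2(R_2)$ convergence: the difference $w_n:=\hat\psi_{n,2}-\hat\psi_2$ solves a uniformly elliptic equation whose right-hand side $\mathrm{div}\big((\hat A_u-\hat A_n)\nabla\hat\psi_2\big)$ and boundary data tend to $0$, so the $H^2$-elliptic estimate yields $w_n\to0$ in $H^2(R_2)$. Strong $H^2(R_2)$ convergence gives $\nabla\hat\psi_{n,2}\to\nabla\hat\psi_2$ in $H^1(R_2)$, hence convergence of their boundary traces in $H^{1/2}(\partial R_2)\hookrightarrow L_p(\partial R_2)$ for every $p<\infty$; composing with the chain rule (whose factors $\partial_x u_n$ converge in $C(\bar D)$) and undoing the shear translates this into \eqref{o1} on $\Sigma(u)$ (the trace at $y=0$) and on the top (the trace at $y=d$).

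\textbf{Main obstacle.} The crux is the strong $H^2(R_2)$ convergence underlying \eqref{o1}. The coefficients $\hat A_n$ converge only in $C(\bar{\mathcal R})$, and their $x$-derivatives involve $\partial_x^2 u_n$, which is merely bounded in $L_2(D)$; a naive $H^2$-elliptic estimate would want $C^1$ coefficients. The dangerous terms must therefore be controlled by exploiting the explicit structure of the transformed operator (equivalently, by difference quotients in $x$ together with the uniform $H^2$ bound of \Cref{Thm1}), and it is precisely here that the one-dimensionality of $D$ and the constant thickness of $\Omega_2$ are used decisively. The moving, possibly bottom-touching jump interface $\{y=y_n\}$ is the secondary difficulty, handled throughout by the equi-integrability of the gradients rather than by any $L_\infty$ control of $\sigma_n$.
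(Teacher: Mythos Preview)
The paper does not prove this proposition; it is quoted from~\cite{LW21} (as the bold citation in the statement indicates), so there is no in-paper proof to compare against. Your outline therefore has to be judged on its own.

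Your treatment of~\eqref{o2} and~\eqref{o3} is sound. The flattening $\Theta_n:\Omega(u_n)\to\mathcal{R}$ is well chosen (non-degenerate since $u_n+H+d\ge d>0$), the passage to the limit in the weak formulation via the $L_q$-equi-integrability of $\nabla\Phi_n$ is correct, and the energy sandwich upgrading weak to strong $H^1$ convergence is standard.

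For~\eqref{o1}, however, you over-engineer and then leave the hard part open. You spend the entire ``main obstacle'' paragraph on obtaining \emph{strong} $H^2(R_2)$ convergence of $\hat\psi_{n,2}$, correctly flagging that an $H^2$-elliptic estimate for $\mathrm{div}(\hat A_n\nabla w_n)$ is delicate because $\hat A_n$ is only H\"older and its derivatives involve $\partial_x^2 u_n\in L_2$; you then wave at ``the explicit structure of the transformed operator'' without resolving it. Moreover, your equation for $w_n$ on $R_2$ is not a Dirichlet problem: at $y=0$ it inherits the transmission coupling with $\Omega_1(u_n)$, so ``boundary data tend to $0$'' hides something nontrivial.

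The point is that strong $H^2$ convergence is not needed. Once you have \emph{weak} $H^2(R_2)$ convergence of the full sequence $\hat\psi_{n,2}\rightharpoonup\hat\psi_2$ --- which you already obtain from the uniform bound of \Cref{Thm1}(b) together with the identification of the limit supplied by your~\eqref{o3} --- the components of $\nabla\hat\psi_{n,2}$ converge weakly in $H^1(R_2)$, hence their traces on $\{y=0\}$ and $\{y=d\}$ converge weakly in $H^{1/2}(\partial R_2)$ and therefore strongly in every $L_p(\partial R_2)$, $p<\infty$, by the compact embedding $H^{1/2}\hookrightarrow L_p$ on the one-dimensional boundary. The chain rule and the $C(\bar D)$ convergence of $\partial_x u_n$ then give~\eqref{o1}. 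In short, your self-identified ``crux'' is a non-issue; the genuine content of~\eqref{o1} is: uniform $H^2$ bound $+$ identification of the limit $+$ compactness of the trace map.
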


The first step of the proof of \Cref{C17} is to show that  the electrostatic energy~$E_e$  is Fr\'echet differentiable on $\mathcal{S}$. The next lemma is adapted from \cite[Theorem~5.3.2]{HP05}, see also \cite[Lemma~4.1]{ARMA20}. We include the proof for the reader's ease.

\begin{lemma}\label{P316}
Assume \eqref{bobbybrown}. Let $u\in \mathcal{S}$ be fixed and define, for $v\in \mathcal{S}$, the transformation 
\begin{equation*}
\Theta_{u,v}=(\Theta_{u,v,1},\Theta_{u,v,2}):\Omega(u)\rightarrow \Omega(v)
\end{equation*}  
by 
\begin{subequations}\label{tr}
\begin{align}
&\Theta_{u,v,1}(x,z):=\left(x,z+\frac{v(x)-u(x)}{H+u(x)}(z+H)\right)\,,& &(x,z)\in\Omega_1(u)\,,\\
&\Theta_{u,v,2}(x,z):=(x,z+v(x)-u(x))\,,& &(x,z)\in\Omega_2(u) \,.
\end{align}
\end{subequations}
Then there exists a neighborhood $\mathcal{U}$ of $u$ in $\mathcal{S}$ such that the mapping
\begin{equation*}
\mathcal{U}\rightarrow H_0^1(\Omega(u)),\quad v\mapsto \xi_v:=\big(\psi_v-h_v\big)\circ \Theta_{u,v}
\end{equation*}
is continuously differentiable, recalling that $\mathcal{S}$ and thus also  $\mathcal{U}$ are endowed with the $H^2(D)$-topology.
\end{lemma}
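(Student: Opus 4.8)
The plan is to transport the transmission problem~\eqref{psi}, posed on the variable domain $\Omega(v)$, back to the fixed domain $\Omega(u)$ via $\Theta_{u,v}$, and then apply the implicit function theorem. First I would record that, for $v$ in a sufficiently small neighborhood $\mathcal{U}$ of $u$ in $H^2(D)$, the map $\Theta_{u,v}$ is a bi-Lipschitz homeomorphism of $\Omega(u)$ onto $\Omega(v)$ which sends $\Omega_i(u)$ onto $\Omega_i(v)$ for $i=1,2$ and $\partial\Omega(u)$ onto $\partial\Omega(v)$: on $\Omega_2(u)$ this is clear, while on $\Omega_1(u)$ the second component is the affine-in-$z$ map $z\mapsto z+a(x)(z+H)$ with $a:=(v-u)/(H+u)$, whose slope $1+a(x)$ is positive because $v>-H$ in $D$ and is bounded away from $0$ and $\infty$ once $\|a\|_{C(\bar D)}\le 1/2$, say. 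The latter holds on a neighborhood $\mathcal{U}$ of $u$, since $v\mapsto a$ is continuous from $H^2(D)$ into $H^2(D)\hookrightarrow C^1(\bar D)$ (using that $H^2(D)$ is a Banach algebra in one dimension, that $1/(H+u)\in H^2(D)$ because $u\in\mathcal{S}$ has empty coincidence set so $\inf_{\bar D}(H+u)>0$, and that $\Omega(u)$ is then a bounded Lipschitz domain) and vanishes at $v=u$. In particular $\xi_v=(\psi_v-h_v)\circ\Theta_{u,v}\in H_0^1(\Omega(u))$ for every $v\in\mathcal{U}$.

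Next I would rewrite the weak formulation of~\eqref{psi}. Since $\zeta\equiv 0$ on $(-\infty,1]$, one checks that $h_v\equiv 0$ on $\overline{\Omega_1(v)}$ and, computing the argument of $\zeta$ along $\Theta_{u,v}$ separately on $\Omega_1(u)$ and $\Omega_2(u)$, that $h_v\circ\Theta_{u,v}=h_u$ on all of $\Omega(u)$. Hence $\phi_v:=\psi_v-h_v\in H_0^1(\Omega(v))$ solves $\int_{\Omega(v)}\sigma\nabla\phi_v\cdot\nabla\varphi\,\rd y=-\int_{\Omega(v)}\sigma\nabla h_v\cdot\nabla\varphi\,\rd y$ for all $\varphi\in H_0^1(\Omega(v))$, the transmission conditions~\eqref{a1b} being encoded in this variational identity. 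Choosing $\varphi=\eta\circ\Theta_{u,v}^{-1}$ with $\eta\in H_0^1(\Omega(u))$ and performing the change of variables $y=\Theta_{u,v}(x,z)$ --- noting that $\sigma\circ\Theta_{u,v}$ is the piecewise constant function associated with $\{\Omega_i(u)\}$ --- transforms this into
\[
\int_{\Omega(u)}\mathcal{A}_{u,v}\nabla\xi_v\cdot\nabla\eta\,\rd(x,z)=-\int_{\Omega(u)}\mathcal{A}_{u,v}\nabla h_u\cdot\nabla\eta\,\rd(x,z)\,,\qquad \eta\in H_0^1(\Omega(u))\,,
\]
where $\mathcal{A}_{u,v}:=\sigma\,(\det D\Theta_{u,v})\,(D\Theta_{u,v})^{-1}(D\Theta_{u,v})^{-T}$ and $\mathcal{A}_{u,u}=\sigma\,\mathrm{Id}$. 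From the explicit form of $D\Theta_{u,v}$ (entries $1,0,v'(x)-u'(x),1$ on $\Omega_2(u)$, and $1,0,a'(x)(z+H),1+a(x)$ on $\Omega_1(u)$) one reads off that $\mathcal{A}_{u,v}$ is symmetric and, after shrinking $\mathcal{U}$, uniformly positive definite on $\Omega(u)$; moreover $v\mapsto D\Theta_{u,v}$ is affine and bounded from $H^2(D)$ into $L_\infty(\Omega(u);\R^{2\times2})$ (using $H^2(D)\hookrightarrow C^1(\bar D)$, $H^1(D)\hookrightarrow C(\bar D)$ and the boundedness of $\Omega(u)$ in $z$), and $M\mapsto(\det M)M^{-1}M^{-T}$ is real-analytic on $\{\det M>0\}$, so that $v\mapsto\mathcal{A}_{u,v}$ is $C^1$ (in fact analytic) from $\mathcal{U}$ into $L_\infty(\Omega(u);\R^{2\times2})$. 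Since $\nabla h_u\in L_\infty(\Omega(u);\R^2)$ is fixed, the same argument gives that $v\mapsto\mathcal{L}_{u,v}:=-\mathrm{div}(\mathcal{A}_{u,v}\nabla h_u)$ is $C^1$ from $\mathcal{U}$ into $H^{-1}(\Omega(u))$.

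Then I would invoke the implicit function theorem. Define $\mathcal{G}:\mathcal{U}\times H_0^1(\Omega(u))\to H^{-1}(\Omega(u))$ by $\langle\mathcal{G}(v,\xi),\eta\rangle:=\int_{\Omega(u)}\mathcal{A}_{u,v}\nabla\xi\cdot\nabla\eta\,\rd(x,z)+\langle\mathcal{L}_{u,v},\eta\rangle$. Being affine in $\xi$ with $C^1$-dependence on $v$ of both its linear part and its constant term, $\mathcal{G}$ is continuously differentiable, and $\mathcal{G}(u,\xi_u)=0$ with $\xi_u=\psi_u-h_u$ the variational solution from \Cref{Thm1}(a). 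Furthermore $\partial_\xi\mathcal{G}(u,\xi_u)$ is the operator $\zeta\mapsto-\mathrm{div}(\sigma\nabla\zeta)$, an isomorphism of $H_0^1(\Omega(u))$ onto $H^{-1}(\Omega(u))$ by Lax--Milgram, the form $\int_{\Omega(u)}\sigma\nabla\zeta\cdot\nabla\eta$ being coercive because $\sigma\ge\min\{\sigma_1,\sigma_2\}>0$ and $\Omega(u)$ is bounded. The implicit function theorem thus provides, after shrinking $\mathcal{U}$, a $C^1$ map $v\mapsto\widehat\xi_v\in H_0^1(\Omega(u))$ with $\widehat\xi_u=\xi_u$ and $\mathcal{G}(v,\widehat\xi_v)=0$. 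The change-of-variables computation shows $\mathcal{G}(v,\xi_v)=0$, and for each $v\in\mathcal{U}$ the equation $\mathcal{G}(v,\cdot)=0$ has a unique solution in $H_0^1(\Omega(u))$ by the uniform ellipticity of $\mathcal{A}_{u,v}$; hence $\widehat\xi_v=\xi_v$, which is the assertion. The routine parts are the change of variables, the one-dimensional Sobolev-algebra bookkeeping and the ellipticity estimates; the point demanding care is the transmission structure, namely that transporting the problem by the piecewise-defined $\Theta_{u,v}$ across the non-flat interface $\Sigma(u)$ again yields a bona fide transmission problem whose interface conditions~\eqref{a1b} are carried by the single discontinuous, symmetric, uniformly elliptic coefficient $\mathcal{A}_{u,v}$, and that its $C^1$-dependence on $v$ persists in the $H^2(D)$-topology notwithstanding this discontinuity.
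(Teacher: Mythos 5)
Your proof is correct and follows essentially the same route as the paper: transport the transmission problem to the fixed domain $\Omega(u)$ via $\Theta_{u,v}$, rewrite it as an equation $F(v,\xi_v)=0$ for a $C^1$ map into $H^{-1}(\Omega(u))$, and apply the implicit function theorem, with Lax--Milgram providing the isomorphism property of the partial derivative in $\xi$ at $(u,\xi_u)$. The only (harmless) deviations are your observation that $h_v\circ\Theta_{u,v}=h_u$, which lets the data term be written as $-\mathrm{div}(\mathcal{A}_{u,v}\nabla h_u)$ instead of the paper's explicit differentiation of $\nabla h_v\circ\Theta_{u,v}$, and your identification $\widehat\xi_v=\xi_v$ via the global Lax--Milgram uniqueness for the transported uniformly elliptic problem, where the paper instead combines the convergence \eqref{o3} with the local uniqueness furnished by the implicit function theorem.
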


\begin{proof}
Set $\chi_v:=\psi_v-h_v$ for $v\in \mathcal{S}$. Owing to \Cref{Thm1}, the function $\chi_v$ belongs to $H_0^1(\Omega(v))$ and satisfies the integral identity
\begin{equation}\label{w1}
\int_{\Omega(v)}\sigma\nabla\chi_v\cdot\nabla \theta\,\rd(\bar x,\bar z)
=-\int_{\Omega(v)}\sigma\nabla h_v\cdot\nabla \theta\,\rd(\bar x,\bar z)\,,\quad \theta\in H_0^1(\Omega(v))\,,
\end{equation}
which we next shall write as integrals over $\Omega(u)$. To this end, we first note that
\begin{equation}
\xi_u=\chi_u\,,\qquad \nabla\xi_v=D\Theta_{u,v}^T\nabla\chi_v\circ \Theta_{u,v}\,, \label{gksi}
\end{equation}
where 
\begin{equation*}
 D\Theta_{u,v,1}(x,z)=\left(\begin{matrix}  1&0\\
\\
\displaystyle{(z+H)\partial_x\left(\frac{v-u}{H+u}\right)(x)} & \displaystyle{\frac{H+v(x)}{H+u(x)} }\end{matrix}\right)\,,\qquad (x,z)\in \Omega_1(u)\,,
\end{equation*}
and
\begin{equation*}
 D\Theta_{u,v,2}(x,z)=\left(\begin{matrix}  1&0\quad \\
\\
\partial_x(v-u)(x) & 1\quad \end{matrix}\right)\,,\qquad (x,z)\in \Omega_2(u)\,.
\end{equation*}
For $\phi\in H_0^1(\Omega(u))$ we set
\begin{equation*}
\phi_v:=\phi\circ \Theta_{u,v}^{-1}\in H_0^1(\Omega(v))
\end{equation*}
and note that  
\begin{equation*}
\nabla\phi_v=\big((D\Theta_{u,v}^T)^{-1}\nabla\phi\big)\circ \Theta_{u,v}^{-1}\,.
\end{equation*}
Performing the change of variables $(\bar x,\bar z)=\Theta_{u,v}(x,z)$ in \eqref{w1} with $\theta=\phi_v$ and using \eqref{sigma} give
\begin{equation}\label{w6}
\begin{split}
\int_{\Omega(u)} \sigma\, J_v  &(D\Theta_{u,v})^{-1} (D\Theta_{u,v}^T)^{-1}\nabla\xi_v\cdot\nabla\phi\,\rd (x,z)\\
&=  
-\int_{\Omega(u)} \sigma\, J_v\, (D\Theta_{u,v})^{-1}\nabla h_v\circ \Theta_{u,v}\cdot\nabla\phi\,\rd (x,z)\,,
\end{split}
\end{equation}
where the Jacobian $J_v:=\vert\mathrm{det}(D\Theta_{u,v})\vert$ is given by
\begin{equation}\label{Jv}
J_{v,1}=\frac{H+v}{H+u} \;\;\text{ in }\;\; \Omega_1(u)\,,\qquad  J_{v,2}= 1  \;\;\text{ in }\;\; \Omega_2(u)\,.
\end{equation}
Introducing the notations
\begin{equation*}
A(v):=\sigma\,J_v\,  (D\Theta_{u,v})^{-1} (D\Theta_{u,v}^T)^{-1}
\end{equation*}
and 
\begin{equation*}
B(v):=\mathrm{div}\big(\sigma\,J_v\, (D\Theta_{u,v})^{-1}\nabla h_v\circ \Theta_{u,v}\big)\,,
\end{equation*}
we define the function 
\begin{equation*}
F: \mathcal{S}\times H_0^1(\Omega(u))\rightarrow H^{-1}(\Omega(u))\,,\quad (v,\xi)\mapsto -\mathrm{div}\big(A(v)\nabla\xi\big)-B(v)
\end{equation*}
and observe that \eqref{w6} is equivalent to 
\begin{equation}\label{w7}
F(v,\xi_v)=0\,,\quad v\in \mathcal{S}\,.
\end{equation}
We then shall use the implicit function theorem to show that $\xi_v$ depends smoothly on~$v$. For that purpose, let us first show that $F$ is Fr\'echet differentiable in $\mathcal{S}\times H_0^1(\Omega(u))$. Indeed, by \eqref{bobbybrown}, it is readily checked that
\begin{equation*}
\nabla h_{v} \circ \Theta_{u,v} (x,z)= {\bf 1}_{\Omega_2(u)}\zeta'\big(z- u(x)+1\big) \left(\begin{array}{c} -\partial_x v(x) \\ 1 \end{array}\right)\,,
\end{equation*}
so that its Fr\'echet derivative with respect to $v$ is
\begin{equation}\label{diffh}
\partial_v\big(\nabla h_{v} \circ \Theta_{u,v}\big)[\vartheta] (x,z)={\bf 1}_{\Omega_2(u)}\zeta'\big(z- u(x)+1\big) \left(\begin{array}{c} -\partial_x \vartheta(x) \\ 0 \end{array}\right)
\end{equation}
for $\vartheta\in H^2(D)\cap H_0^1(D)$. Thus,
\begin{equation*}
\big[v\mapsto\nabla h_{v} \circ \Theta_{u,v}\big]\in C^1\big(\mathcal{S},L_2(\Omega(u),\R^2)\big)\,.
\end{equation*}
Moreover, $v\mapsto J_v$ and $v\mapsto (D\Theta_{u,v})^{-1}$ are continuously differentiable from  $\mathcal{S}$ to $L_\infty(\Omega(u))$ and $L_\infty(\Omega(u),\R^{2\times 2})$, respectively, and we conclude that
\begin{equation*}
v\mapsto \sigma\,J_v\, (D\Theta_{u,v})^{-1}\nabla h_v\circ \Theta_{u,v}
\end{equation*}
is continuously differentiable from $\mathcal{S}$ to $L_2(\Omega(u),\R^{2})$. Hence $B\in C^1(\mathcal{S},H^{-1}(\Omega(u)))$. The $C^1$-smoothness of $(v,\xi)\mapsto \mathrm{div}(A(v)\nabla\xi)$ is proven as in \cite[Theorem~5.3.2]{HP05} and we have thus established that
\begin{equation*}
F\in C^1\big(\mathcal{S}\times H_0^1(\Omega(u)), H^{-1}(\Omega(u))\big)\,.
\end{equation*}
The Lax-Milgram theorem and the open mapping theorem imply that the mapping 
$$
\omega\mapsto\partial_\xi F(u,\xi_u)[\omega] = -\mathrm{div}(\sigma\nabla\omega)
$$ 
is  an isomorphism from $H_0^1(\Omega(u))$ to $H^{-1}(\Omega(u))$. Consequently, the implicit function theorem ensures the existence of a neighborhood $\mathcal{U}$ of $u$ in $\mathcal{S}$ and a function $\Xi\in C^1(\mathcal{U},H_0^1(\Omega(u))$ such that 
$$
	\Xi(u)=\xi_u \;\;\text{ and }\;\; F(v,\Xi(v))=0 \;\;\text{ for }\;\; v\in \mathcal{U}\,.
$$ 
By~\eqref{o3}, $\xi_v\in \Xi(\mathcal{U})$ for $\|v-u\|_{H^2(D)}$ sufficiently small and we infer from \eqref{w7} and the uniqueness provided by the implicit function theorem that $\xi_v=\Xi(v)$ for $v\in \mathcal{U}$. 
\end{proof}

We next compute the Fr\'echet derivative of the electrostatic energy  on $\mathcal{S}$ and thus provide a proof for the first part of \Cref{C17}. 

\begin{proposition}\label{C15}
Assume \eqref{bobbybrown}. The electrostatic energy $E_e:\mathcal{S}\rightarrow\R$ is continuously Fr\'echet differentiable with
$$
\partial_uE_e(u)[\vartheta]= \int_D g(u)(x)\,\vartheta(x)\,\rd x
$$
for $u\in \mathcal{S}$ and $\vartheta\in H^2(D)\cap H_0^1(D)$, where $g(u)$ is defined in~\eqref{g}.
\end{proposition}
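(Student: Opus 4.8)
The plan is to combine the change-of-variables identity from \Cref{P316} with a differentiation of the transformed electrostatic energy, and then to undo the transformation in the limit so as to produce a boundary integral over $\Sigma(u)$ and the top plate. First I would rewrite $E_e(v)=-\frac12\int_{\Omega(v)}\sigma|\nabla\psi_v|^2\,\rd(\bar x,\bar z)$ by pulling it back to the fixed domain $\Omega(u)$ via $\Theta_{u,v}$; since $\psi_v=\xi_v\circ\Theta_{u,v}^{-1}+h_v$ with $\xi_v$ the $C^1$-map from \Cref{P316}, one gets
$$
E_e(v)=-\frac12\int_{\Omega(u)}\sigma\,J_v\,\big(D\Theta_{u,v}^T\big)^{-1}\big(\nabla\xi_v+\nabla h_v\circ\Theta_{u,v}\big)\cdot\big(D\Theta_{u,v}^T\big)^{-1}\big(\nabla\xi_v+\nabla h_v\circ\Theta_{u,v}\big)\,\rd(x,z)\,,
$$
which, using the notation $A(v)$ from the lemma, reads $E_e(v)=-\frac12\int_{\Omega(u)}A(v)\big(\nabla\xi_v+\tilde h_v\big)\cdot\big(\nabla\xi_v+\tilde h_v\big)\,\rd(x,z)$ with $\tilde h_v:=\nabla h_v\circ\Theta_{u,v}$. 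All factors $A(v)$, $\tilde h_v$, and $v\mapsto\xi_v$ are $C^1$ in the $H^2(D)$-topology near $u$ (the first two by the explicit formulas for $J_v$, $D\Theta_{u,v}$, and \eqref{diffh}; the last by \Cref{P316}), so $E_e$ is $C^1$ on a neighborhood of $u$, and since $u\in\mathcal{S}$ was arbitrary, $E_e\in C^1(\mathcal{S},\R)$.

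Next I would compute $\partial_uE_e(u)[\vartheta]$ by differentiating this expression at $v=u$, where $\Theta_{u,u}=\mathrm{id}$, $A(u)=\sigma I$, $\tilde h_u=\nabla h_u$, and $\nabla\xi_u=\nabla\chi_u=\nabla\psi_u-\nabla h_u$, so $\nabla\xi_u+\tilde h_u=\nabla\psi_u$. The product rule yields three contributions:
$$
\partial_uE_e(u)[\vartheta]=-\frac12\int_{\Omega(u)}\big(\partial_vA(u)[\vartheta]\big)\nabla\psi_u\cdot\nabla\psi_u\,\rd(x,z)-\int_{\Omega(u)}\sigma\nabla\psi_u\cdot\big(\partial_v\xi_u[\vartheta]\big)'\,\rd(x,z)-\int_{\Omega(u)}\sigma\nabla\psi_u\cdot\partial_v\tilde h_u[\vartheta]\,\rd(x,z)\,,
$$
where $\big(\partial_v\xi_u[\vartheta]\big)'$ abbreviates $\nabla\big(\partial_v\Xi(u)[\vartheta]\big)$. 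The crucial simplification is that the middle term vanishes: $\partial_v\Xi(u)[\vartheta]\in H_0^1(\Omega(u))$, and $\psi_u$ solves the weak transmission problem, so $\int_{\Omega(u)}\sigma\nabla\psi_u\cdot\nabla\phi\,\rd(x,z)=\int_{\Omega(u)}\sigma\nabla\chi_u\cdot\nabla\phi\,\rd(x,z)+\int_{\Omega(u)}\sigma\nabla h_u\cdot\nabla\phi=0$ for every $\phi\in H_0^1(\Omega(u))$ by \eqref{w1}. Hence only the $\partial_vA(u)[\vartheta]$ term and the $\partial_v\tilde h_u[\vartheta]$ term survive, and both are \emph{explicit}: $\partial_vA(u)[\vartheta]$ comes from differentiating $J_v$, $(D\Theta_{u,v})^{-1}$, $(D\Theta_{u,v}^T)^{-1}$ using \eqref{tr}--\eqref{Jv}, and $\partial_v\tilde h_u[\vartheta]$ is read off \eqref{diffh}.

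What remains is the reduction of these two volume integrals over $\Omega_1(u)$ and $\Omega_2(u)$ to the one-dimensional integral $\int_D g(u)\,\vartheta\,\rd x$. This is the main obstacle and the genuinely computational heart of the argument: one integrates by parts in $z$ (and then in $x$), using that $\psi_u$ is harmonic in each $\Omega_i(u)$ by \eqref{a1a}, that $\llbracket\psi_u\rrbracket=\llbracket\sigma\nabla\psi_u\rrbracket\cdot\mathbf n_{\Sigma(u)}=0$ on $\Sigma(u)$ by \eqref{a1b}, and that $\psi_u$ takes the prescribed boundary values on $\partial\Omega(u)$ with $h_u\equiv 0$ on $\{z=-H\}$ and $h_u\equiv V$ constant on $\{z=u+d\}$; the $H^2$-regularity from \Cref{Thm1} justifies all boundary traces. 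The contributions on the lateral boundary $\{x=\pm L\}$ should cancel using $\partial_x u(\pm L)$ compatibility built into $\mathcal{S}$, and along $\Sigma(u)$ the jump conditions convert the awkward terms involving $\nabla\psi_{u,1}$ into the expressions involving $\nabla\psi_{u,2}$ and the factor $\sigma_2/\sigma_1$ appearing in the second line of \eqref{g}; this is precisely where the non-flat interface makes the bookkeeping heavier than in the flat case of \cite{ARMA20}. After collecting terms, Fubini's theorem in the $x$-variable produces the pointwise density $g(u)(x)$ as in \eqref{g}. Finally, continuity of $\partial_uE_e$ follows from \Cref{Thm2} (via \eqref{o1} and \eqref{o3}), which also gives the continuity of $g:\bar{\mathcal{S}}\to L_p(D)$ claimed in \Cref{C17} and will be used for the directional-derivative statement there.
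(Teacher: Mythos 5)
Your overall route is the same as the paper's: pull the energy back to the fixed domain $\Omega(u)$ via $\Theta_{u,v}$, invoke the $C^1$-dependence of $v\mapsto\xi_v$ from \Cref{P316}, differentiate at $v=u$, discard the term containing $\nabla\big(\partial_v\xi_v[\vartheta]\big\vert_{v=u}\big)$ (your argument via the weak formulation \eqref{w1} is a clean equivalent of the paper's proof that $I_0(u)[\vartheta]=0$), and obtain continuity of the derivative from \Cref{Thm2}. But there are two genuine problems. First, your pulled-back energy identity is wrong as written: with $\tilde h_v:=(\nabla h_v)\circ\Theta_{u,v}$ one has $(\nabla\psi_v)\circ\Theta_{u,v}=(D\Theta_{u,v}^T)^{-1}\nabla\xi_v+\tilde h_v$, \emph{not} $(D\Theta_{u,v}^T)^{-1}\big(\nabla\xi_v+\tilde h_v\big)$; the matrix factor must not act on $\tilde h_v$ (it would if you replaced $\tilde h_v$ by $\nabla(h_v\circ\Theta_{u,v})$, but then $\partial_v\tilde h_u[\vartheta]$ is no longer given by \eqref{diffh}, so your three-term product rule is inconsistent either way). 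This is not a harmless slip: carried through, your expansion produces on $\Omega_2(u)$ the integrand $\sigma_2\,\partial_x\psi_{u,2}\big(\partial_z\psi_{u,2}+\zeta'(z-u+1)\big)\partial_x\vartheta$ instead of the correct $\sigma_2\,\partial_x\psi_{u,2}\,\partial_z\psi_{u,2}\,\partial_x\vartheta$, i.e.\ a spurious term survives and you do not land on \eqref{g}. The correct bookkeeping is \eqref{D1a}--\eqref{D2}, where the derivative of $D\Theta_{u,v}^T$ multiplies $\nabla\chi_u$, not $\nabla\psi_u$.

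Second, the step you yourself call the computational heart is not actually carried out. After the integrations by parts one is left with the interface terms $\big\llbracket \sigma(\partial_x\psi_u)^2-\sigma(\partial_z\psi_u)^2\big\rrbracket$ and $\partial_xu\,\llbracket\sigma\,\partial_x\psi_u\partial_z\psi_u\rrbracket$ on $\Sigma(u)$ (cf.\ \eqref{p10}), and turning these into the two squared expressions of \eqref{g} requires the decomposition of $\nabla\psi_u$ into the tangential and co-normal quantities $F_u,G_u$ of \eqref{FG}, whose continuity properties across $\Sigma(u)$ yield \eqref{VH}; nothing in your sketch supplies this algebra, which is exactly where the non-flat interface differs from \cite{ARMA20}. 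Moreover, your claim that the lateral boundary contributions cancel ``using the $\partial_xu(\pm L)$ compatibility built into $\mathcal{S}$'' is off the mark: they vanish simply because $\vartheta\in H_0^1(D)$ and $\partial_v\xi_v[\vartheta]\big\vert_{v=u}\in H_0^1(\Omega(u))$, while the sign condition on $\partial_xu(\pm L)$ only enters the $H^2$-regularity theory of \cite{LW21}. So the proposal identifies the right strategy, but as written it starts from an incorrect identity and leaves the derivation of the explicit density $g(u)$ --- the actual content of the proposition --- unproven.
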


\begin{proof}
We fix $u\in \mathcal{S}$ and use the notation introduced in \Cref{P316}. Recall that, according to \Cref{P316}, there is a neighborhood $\mathcal{U}$ of $u$ in $\mathcal{S}$ such that the mapping  
$$
v\mapsto \xi_v=\big(\psi_v-h_v\big)\circ \Theta_{u,v}
$$  
belongs to $C^1(\mathcal{U},H_0^1(\Omega(u)))$, the transformation $\Theta_{u,v}:\Omega(u)\rightarrow \Omega(v)$
being defined in~\eqref{tr}. Now, for $v\in\mathcal{U}$, we use \eqref{gksi}, the relation $\chi_v=\psi_v-h_v$, and the change of variable $(\bar x,\bar z)=\Theta_{u,v}(x,z)$ in the integral defining $E_e(v)$ to obtain
\begin{equation*}
E_e(v) = -\frac{1}{2}\int_{\Omega(v)} \sigma \vert\nabla \psi_v\vert^2\,\rd (\bar x,\bar z) = -\frac{1}{2} \int_{\Omega(u)} \sigma |j(v)|^2 J_v\,\rd (x,z)\,,
\end{equation*}
where
\begin{equation*}
j(v):=(D\Theta_{u,v}^T)^{-1}\nabla\xi_v+\nabla h_v\circ\Theta_{u,v}\,.
\end{equation*}
Owing to the differentiability of $v\mapsto \xi_v$ in $\mathcal{U}$, we deduce that the Fr\'echet derivative of $E_e$ at $u$ applied to some $\vartheta\in H^2(D)\cap H_0^1(D)$ is given by
\begin{equation*}
\begin{split}
\partial_u E_e(u)[\vartheta]=\partial_v E_e(v)[\vartheta]\big\vert_{v=u}=\, & -\int_{\Omega(u)} \sigma  j(u)\cdot (\partial_v j(v))[\vartheta]\big\vert_{v=u}\, J_u\,\rd (x,z)\\
&  -\frac{1}{2}\int_{\Omega(u)} \sigma \vert j(u)\vert^2\, (\partial_v J_v)[\vartheta]\big\vert_{v=u}\,\rd (x,z)\,.
\end{split}
\end{equation*}
Taking the identity $j(u) =\nabla\chi_u + \nabla h_u=\nabla\psi_u$ into account, we infer from \eqref{Jv} that
\begin{equation}\label{D1}
\begin{split}
\partial_u E_e(u)[\vartheta]=\, & -\int_{\Omega(u)} \sigma  \nabla\psi_u\cdot \big(\partial_v j(v)[\vartheta]\big\vert_{v=u}\big)\,\rd (x,z)\\
& -\frac{1}{2}\int_{\Omega_1(u)} \sigma_1 \vert \nabla\psi_{u,1}\vert^2\, \frac{\vartheta}{H+u}\,\rd (x,z)\,.
\end{split}
\end{equation}
We next use that $\Theta_{u,u}$ is the identity on $\Omega(u)$ and that $\xi_u=\chi_u$ to compute from the definition of $j(v)$ that
\begin{equation}\label{D1a}
\begin{split}
\partial_v j(v)[\vartheta]\big\vert_{v=u}=\, &-\partial_v (D\Theta_{u,v}^T)[\vartheta]\big\vert_{v=u} \nabla \chi_u + \partial_v (\nabla\xi_v)[\vartheta]\big\vert_{v=u}\\
& +\partial_v (\nabla h_v\circ \Theta_{u,v})[\vartheta]\big\vert_{v=u}\,.
\end{split}
\end{equation}
Now, $\chi_{u,1}=\psi_{u,1}$ in $\Omega_1(u)$ due to \eqref{bobbybrown}, so that
\begin{equation}\label{D3}
-\partial_v (D\Theta_{u,v}^T)[\vartheta]\big\vert_{v=u} \nabla \chi_u=-\partial_z\psi_{u}\nabla\left(\frac{\vartheta(z+H)}{H+u}\right) \quad \text{ in }\ \Omega_1(u)\,,
\end{equation}
while
\begin{equation}\label{D2}
-\partial_v (D\Theta_{u,v}^T)[\vartheta]\big\vert_{v=u} \nabla \chi_u=-\left(\begin{array}{c} \partial_z\chi_u \partial_x\vartheta\\ 0\end{array}\right)\quad \text{ in }\ \Omega_2(u)\,.
\end{equation}
Also note that
\begin{equation}\label{D4}
\partial_v (\nabla\xi_v)[\vartheta]\big\vert_{v=u}=\nabla \big(\partial_v \xi_v[\vartheta]\big\vert_{v=u}\big)\quad \text{ in }\ \Omega(u)\,.
\end{equation}
Consequently, gathering \eqref{D1}-\eqref{D4} and recalling~\eqref{diffh} lead us to
\begin{equation}\label{D6}
\partial_u E_e(u)[\vartheta] = I_0(u)[\vartheta] + I_1(u)[\vartheta] + I_2(u)[\vartheta]\,,
\end{equation}
where
\begin{equation*}
	I_0(u)[\vartheta] := -\int_{\Omega(u)} \sigma\,  \nabla\psi_u\cdot \nabla\big(  \partial_v \xi_v[\vartheta]\big\vert_{v=u}\big)\,\rd (x,z)\,,
\end{equation*}
\begin{align*}
	I_1(u)[\vartheta]  :=\, & \int_{\Omega_1(u)} \sigma_1\, \partial_z\psi_{u,1}\,  \nabla\psi_{u,1}\cdot \nabla \left(\frac{\vartheta(z+H)}{H+u}\right)\,\rd (x,z)\\
	&   -\frac{1}{2}\int_{\Omega_1(u)} \sigma_1\,  \vert \nabla\psi_{u,1}\vert^2 \,  \frac{\vartheta}{H+u} \,\rd (x,z)\,,
\end{align*}
and
\begin{align*}
	I_2(u)[\vartheta]  :=\, & \int_{\Omega_2(u)} \sigma_2\,  \partial_x\psi_{u,2}\, \zeta'\big(z- u+1\big)\, \partial_x\vartheta \,\rd (x,z)\\
	&  +\int_{\Omega_2(u)} \sigma_2\, \partial_x\psi_{u,2}\,\partial_z\chi_{u,2}\, \partial_x\vartheta\, \rd (x,z)\,.
\end{align*}
We are left with simplifying these three integrals and begin with $I_0(u)[\vartheta]$. We use Gau\ss ' theorem and \eqref{a1a}  to get
\begin{equation*}
\begin{split}
I_0(u)[\vartheta]=\, & -\int_{\partial\Omega(u)} \big(  \partial_v \xi_v[\vartheta]\big\vert_{v=u} \big)\sigma\nabla\psi_u\cdot  {\bf n}_{\partial\Omega(u)}\,\rd S\\
&-\int_{\Sigma(u)}\llbracket \partial_v \xi_v[\vartheta]\big\vert_{v=u} \sigma\nabla\psi_u\rrbracket\cdot {\bf n}_{\Sigma(u)}\,\rd S\,.
\end{split}
\end{equation*}
Now, recall that $\partial_v \xi_v[\vartheta]\big\vert_{v=u}$ belongs to $H_0^1(\Omega(u))$ according to \Cref{P316}. On the one hand, this entails that $\partial_v \xi_v[\vartheta]\big\vert_{v=u}$ vanishes on $\partial\Omega(u)$, so that the first integral on the right-hand side of the above identity is zero. On the other hand, the $H^1$-regularity of $\partial_v \xi_v[\vartheta]\big\vert_{v=u}$ also implies that $\llbracket \partial_v \xi_v[\vartheta]\big\vert_{v=u} \rrbracket=0$ on $\Sigma(u)$, so that
\begin{equation*}
\llbracket \partial_v \xi_v[\vartheta]\big\vert_{v=u} \sigma\nabla\psi_u\rrbracket\cdot {\bf n}_{\partial\Sigma(u)}= \partial_v \xi_v[\vartheta]\big\vert_{v=u} \,\llbracket\sigma\nabla\psi_u\rrbracket\cdot {\bf n}_{\Sigma(u)}=0\quad\text{on }\ \Sigma(u)
\end{equation*}
due to \eqref{a1b}. Therefore,
\begin{equation}\label{i0}
I_0(u)[\vartheta] = 0 \,.
\end{equation}
We next deal with $I_1(u)[\vartheta]$. Since $\sigma_1 \Delta\psi_{u,1} = \mathrm{div}(\sigma\nabla\psi_u) = 0$ in $\Omega_1(u)$ by \eqref{a1a}, it follows from Gau\ss' theorem that
\begin{align*}
	I_1(u)[\vartheta]  =\, & \int_{\Omega_1(u)} \sigma_1\, \partial_z\psi_{u,1}\, \mathrm{div}\left( \left(\frac{\vartheta(z+H)}{H+u}\right) \nabla\psi_{u,1} \right)\,\rd (x,z)\\
	&   -\frac{1}{2}\int_{\Omega_1(u)} \sigma_1\,  \vert \nabla\psi_{u,1}\vert^2 \,  \frac{\vartheta}{H+u} \,\rd (x,z) \\
	 =\, & \int_{\partial\Omega_1(u)}\sigma_1\, \frac{\vartheta(z+H)}{H+u} \partial_z\psi_{u,1}\nabla\psi_{u,1}\cdot {\bf n}_{\partial\Omega_1(u)}\,\rd S\\
	&  - \int_{\Omega_1(u)} \sigma_1\,  \nabla\psi_{u,1}\cdot \nabla\left(\partial_z\psi_{u,1}\,\right)\frac{\vartheta(z+H)}{H+u}\,\rd (x,z)\\
	&   -\frac{1}{2}\int_{\Omega_1(u)} \sigma_1\,  \vert \nabla\psi_{u,1}\vert^2 \,  \frac{\vartheta}{H+u} \,\rd (x,z) \,.
\end{align*}
Recalling that $\vartheta\in H_0^1(D)$ and noticing that $\nabla\psi_{u,1}\cdot \nabla\left(\partial_z\psi_{u,1}\,\right)=\partial_z\big(|\nabla\psi_{u,1}|^2)/2$, we further obtain
\begin{align*}
	I_1(u)[\vartheta]  =\, &  \int_{D} \sigma_1\, \partial_z \psi_{u,1}(x,u(x)) \big( - \partial_x u \partial_x \psi_{u,1} + \partial_z \psi_{u,1} \big)(x,u(x)) \vartheta(x) \,\rd x \\
	&  - \frac{1}{2} \int_D \sigma_1\, |\nabla\psi_{u,1}(x,u(x))|^2 \vartheta(x)\,\rd x \,.
\end{align*}
Hence, 
\begin{equation}
	\begin{split}
	I_1(u)[\vartheta]  =\, & -\frac{1}{2} \int_{D} \sigma_1\, \left( |\partial_x\psi_{u,1}|^2 - |\partial_z\psi_{u,1}|^2 \right)(x,u(x)) \vartheta(x)\,\rd x  \\
	&  - \int_{D} \sigma_1\, \partial_x u(x)\big( \partial_x\psi_{u,1} \partial_z\psi_{u,1} \big)(x,u(x)) \vartheta(x) \,\rd x\,.
	\end{split}\label{i1}
\end{equation}
Finally, using \eqref{exx1}, $\chi_u=\psi_u-h_u$ and $\vartheta\in H_0^1(D)$, it follows from Green's formula that 
\begin{align*}
	I_2(u)[\vartheta]  =\, &  \int_{\Omega_2(u)} \sigma_2\, \partial_x\psi_{u,2} \partial_z\psi_{u,2} \partial_x\vartheta \,\rd (x,z) \\
	 =\, & -\int_{D} \sigma_2\, \big( \partial_x\psi_{u,2} \partial_z\psi_{u,2} \big)(x,u(x)+d) \partial_x u(x)\,\rd x\\
	&  + \int_{D} \sigma_2\, \big( \partial_x\psi_{u,2} \partial_z\psi_{u,2} \big)(x,u(x)) \partial_x u(x)\,\rd x\\
	&  - \int_{\Omega_2(u)} \sigma_2\, \partial_x \big( \partial_x\psi_{u,2} \partial_z\psi_{u,2}\big) \vartheta \,\rd (x,z) \,.
\end{align*}
Owing to \eqref{a1a}, we have $\sigma_2 \partial_x^2\psi_{u,2} = -\sigma_2 \partial_z^2\psi_{u,2}$ in $\Omega_2(u)$ from which we deduce that
\begin{align*}
	& \int_{\Omega_2(u)} \sigma_2\, \partial_x \big( \partial_x\psi_{u,2} \partial_z\psi_{u,2}\big) \vartheta \,\rd (x,z) \\
	& \qquad\qquad = \int_{\Omega_2(u)} \sigma_2\, \big( \partial_x^2\psi_{u,2} \partial_z\psi_{u,2} + \partial_x\psi_{u,2} \partial_x\partial_z\psi_{u,2} \big) \vartheta \,\rd (x,z) \\
	& \qquad\qquad = \int_{\Omega_2(u)} \sigma_2\, \big( - \partial_z\psi_{u,2} \partial_z^2\psi_{u,2}+ \partial_x\psi_{u,2} \partial_x\partial_z\psi_{u,2} \big) \vartheta \,\rd (x,z) \\
	& \qquad\qquad = \frac{1}{2} \int_{\Omega_2(u)} \sigma_2\, \partial_z \big( |\partial_x\psi_{u,2}|^2 - |\partial_z\psi_{u,2}|^2 \big) \vartheta \,\rd (x,z) \\
	& \qquad\qquad = \frac{1}{2} \int_{D} \sigma_2\, \big( |\partial_x\psi_{u,2}|^2 - |\partial_z\psi_{u,2}|^2 \big)(x,u(x)+d) \vartheta(x) \,\rd x \\
	& \qquad\qquad\quad  - \frac{1}{2} \int_{D} \sigma_2\, \big( |\partial_x\psi_{u,2}|^2 - |\partial_z\psi_{u,2}|^2 \big)(x,u(x)) \vartheta(x) \,\rd x\,.
\end{align*}
Consequently,
\begin{align*}
	I_2(u)[\vartheta]  =\, & -\int_{D} \sigma_2\, \big( \partial_x\psi_{u,2} \partial_z\psi_{u,2} \big)(x,u(x)+d) \partial_x u(x)\,\rd x\\
	&  + \int_{D} \sigma_2\, \big( \partial_x\psi_{u,2} \partial_z\psi_{u,2} \big)(x,u(x)) \partial_x u(x)\,\rd x\\
	&  - \frac{1}{2} \int_{D} \sigma_2\, \big( |\partial_x\psi_{u,2}|^2 - |\partial_z\psi_{u,2}|^2 \big)(x,u(x)+d) \vartheta(x) \,\rd x \\
	&  + \frac{1}{2} \int_{D} \sigma_2\, \big( |\partial_x\psi_{u,2}|^2 - |\partial_z\psi_{u,2}|^2 \big)(x,u(x)) \vartheta(x) \,\rd x\,.
\end{align*}
We finally note that
\begin{equation*}
	\partial_x\psi_{u,2}(x,u(x)+d))=-\partial_x u(x) \partial_z\psi_{u,2}(x,u(x)+d)\,,
\end{equation*}
since $\psi_{u,2}(x,u(x)+d)=V$ owing to \eqref{a1c} and \eqref{z1}. This identity allows us to simplify further the formula for $I_2(u)[\vartheta]$, so that we end up with
\begin{equation}
	\begin{split}
		I_2(u)[\vartheta]  =\, &  \frac{1}{2} \int_{D} \sigma_2\, |\nabla\psi_{u,2}(x,u(x)+d)|^2\, \rd x \\
		&  + \int_{D} \sigma_2\, \big( \partial_x\psi_{u,2} \partial_z\psi_{u,2} \big)(x,u(x)) \partial_x u(x)\,\rd x\\
		 &  + \frac{1}{2} \int_{D} \sigma_2\, \big( |\partial_x\psi_{u,2}|^2 - |\partial_z\psi_{u,2}|^2 \big)(x,u(x)) \vartheta(x) \,\rd x\,.
	\end{split}\label{i2}
\end{equation} 
Collecting \eqref{D6}, \eqref{i0}, \eqref{i1}, and \eqref{i2} gives
\begin{equation}\label{p10}
\begin{split}
\partial_u E_e(u)[\vartheta]=\, & - \frac{1}{2}\int_D \left\llbracket \sigma(\partial_x\psi_u)^2 -\sigma(\partial_z\psi_u)^2 \right\rrbracket (x,u(x))\,\vartheta(x)\,\rd x\\
&-\int_D \partial_x u(x)\,\left\llbracket \sigma\partial_x\psi_u\partial_z\psi_u  \right\rrbracket (x,u(x))\,\vartheta(x)\,\rd x\\
&+\frac{1}{2} \int_{D}\sigma_2\, \big\vert\nabla\psi_{u,2}(x,u(x)+d)\big\vert^2\,\vartheta(x)\,\rd x\,.
\end{split}
\end{equation}
Finally, we shall write \eqref{p10} only in terms of $\psi_{u,2}$. To this end, we set
\begin{equation}
	F_u := \partial_x \psi_u + \partial_x u \partial_z \psi_u\,, \qquad G_u := - \partial_x u \partial_x \psi_u + \partial_z \psi_u\,, \label{FG}
\end{equation}
and observe that differentiating the transmission condition $\llbracket \psi_u \rrbracket = 0$ on $\Sigma(u)$, along with the second transmission condition in \eqref{a1b}, ensures that
\begin{equation*}
	\llbracket F_u \rrbracket = \llbracket \sigma G_u \rrbracket = 0 \;\;\text{ on }\;\; \Sigma(u)\,.
\end{equation*}
These properties in turn imply that
\begin{equation}
	\llbracket \sigma F_u^2 \rrbracket = \llbracket \sigma\rrbracket F_{u,2}^2\,, \qquad  \llbracket \sigma F_u G_u \rrbracket = 0\,, \qquad \llbracket \sigma G_u^2 \rrbracket = \left\llbracket \frac{1}{\sigma} \right\rrbracket \sigma_2^2 G_{u,2}^2 \;\;\text{ on }\;\; \Sigma(u)\,. \label{VH}
\end{equation}
Guided by \eqref{VH}, we next express the jump terms in \eqref{p10} using $F_u$ and $G_u$. Since
\begin{equation*}
	\left[ 1+ (\partial_x u)^2\right] \partial_x \psi_u  = F_u - G_u \partial_x u \quad \text{ and }\quad \left[ 1+ (\partial_x u)^2\right] \partial_z \psi_u = F_u \partial_x u + G_u\,, 
\end{equation*}
we compute
\begin{align*}
	& \left[ 1+ (\partial_x u)^2\right]^2 \left[ (\partial_x\psi_u)^2 - (\partial_z\psi_u)^2 + 2 \partial_x u \partial_x\psi_u \partial_z\psi_u \right] \\
	& \quad\qquad\qquad = (F_u-G_u \partial_x u)^2 - (F_u \partial_x u + G_u)^2 + 2 \partial_x u (F_u - G_u \partial_x u) (F_u \partial_x u + G_u)  \\
	& \quad\qquad\qquad = \left[ 1+ (\partial_x u)^2\right] \left( F_u^2 - 2 F_u G_u \partial_x u - G_u^2 \right) \,.
\end{align*}
Therefore, by \eqref{VH},
\begin{align*}
	& \left[ 1+ (\partial_x u)^2\right] \big\llbracket \sigma (\partial_x\psi_u)^2 - \sigma (\partial_z\psi_u)^2 + 2 \sigma \partial_x u \partial_x\psi_u \partial_z\psi_u \big\rrbracket \\
	& \hspace{2cm} = \big\llbracket \sigma F_u^2 - 2 \sigma F_u G_u \partial_x u - \sigma G_u^2 \big\rrbracket = \llbracket\sigma\rrbracket F_{u,2}^2 - \left\llbracket \frac{1}{\sigma} \right\rrbracket \sigma_2^2 G_{u,2}^2 \\
	& \hspace{2cm} = \llbracket\sigma\rrbracket F_{u,2}^2 +  \frac{\llbracket\sigma\rrbracket\sigma_2}{\sigma_1} G_{u,2}^2\,.
\end{align*}
Consequently, plugging this formula into \eqref{p10} and recalling \eqref{FG} yield
\begin{equation*}
\begin{split}
\partial_u E_e (u)[\vartheta]=\, & - \frac{\llbracket\sigma\rrbracket}{2}\int_D\frac{1}{1+(\partial_x u(x))^2} \big(\partial_x\psi_{u,2}+\partial_xu(x)\partial_z\psi_{u,2}\big)^2 (x,u(x))\,\vartheta(x)\,\rd x\\
&-\frac{\llbracket\sigma\rrbracket\sigma_2}{2\sigma_1}\int_D\frac{1}{1+(\partial_x u(x))^2}\big(\partial_x u(x)\partial_x\psi_{u,2}-\partial_z\psi_{u,2}\big)^2 (x,u(x))\,\vartheta(x)\,\rd x\\
&+\frac{1}{2} \int_{D}\sigma_2\, \big\vert\nabla\psi_{u,2}(x,u(x)+d)\big\vert^2\,\vartheta(x)\,\rd x\,;
\end{split}
\end{equation*}
that is,
$$
\partial_uE_e(u)[\vartheta]= \int_D g(u)(x)\,\vartheta(x)\,\rd x
$$
for $u\in \mathcal{S}$ and $\vartheta\in H^2(D)\cap H_0^1(D)$ with $g(u)$ being defined in~\eqref{g}. It then readily follows from~\eqref{o1} that  
$$
 \partial_uE_e:\mathcal{S}\rightarrow \mathcal{L}\big( H^2(D)\cap H_0^1(D),\R\big)
 $$
is continuous.
\end{proof}

The final step for the proof of \Cref{C17} is to show that the electrostatic energy~$E_e$ admits directional derivatives in the directions $-u+\mathcal{S}$.

\begin{corollary}\label{C17c}
Assume \eqref{bobbybrown}. Let $u_0\in\bar{\mathcal{S}}$ and $u_1\in \mathcal{S}$. Then
\begin{equation*}
\begin{split}
\lim_{t\rightarrow 0^+} \frac{1}{t}\big[ E_e(&u_0+t(u_1-u_0))-E_e(u_0) \big] = \int_D g(u_0)(x)\,(u_1-u_0)(x)\,\rd x\,.
\end{split}
\end{equation*}
Moreover, the function $g:\bar{\mathcal{S}}\rightarrow L_p(D)$ is continuous for each $p\in [1,\infty)$.
\end{corollary}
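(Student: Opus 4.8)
The plan is to reduce the statement to the Fréchet differentiability already established in \Cref{C15}, exploiting the convexity of the segment joining $u_0$ and $u_1$ together with the convergence properties collected in \Cref{Thm2}. Write $u_t := u_0 + t(u_1-u_0) = (1-t)u_0 + t u_1$ for $t\in[0,1]$. Since $u_0\ge -H$ and $u_1>-H$ in $D$, one has $u_t(x) = -H + (1-t)(u_0(x)+H) + t(u_1(x)+H) > -H$ for every $x\in D$ and every $t\in(0,1]$, while the constraint $\pm\llbracket\sigma\rrbracket\partial_x u_t(\pm L)\le 0$ is inherited from $u_0$ and $u_1$ by linearity; hence $u_t\in\mathcal{S}$ for $t\in(0,1]$. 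Moreover $t\mapsto u_t$ is affine --- in particular $C^1$ --- from $\R$ into $H^2(D)$, and maps $(0,1]$ continuously into $\mathcal{S}$ endowed with its $H^2(D)$-topology.

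First I would establish the continuity of $g:\bar{\mathcal{S}}\to L_p(D)$, which is needed below. Given $u_n\to u$ in $H^2(D)$ with $u_n\in\bar{\mathcal{S}}$, the sequence $(u_n)$ is bounded in $\bar{\mathcal{S}}$ and converges to $u$ in $H^1(D)$, so \Cref{Thm2} yields $\nabla\psi_{u_n,2}(\cdot,u_n)\to\nabla\psi_{u,2}(\cdot,u)$ and $\nabla\psi_{u_n,2}(\cdot,u_n+d)\to\nabla\psi_{u,2}(\cdot,u+d)$ in $L_q(D,\R^2)$ for every $q\in[1,\infty)$, while $\partial_x u_n\to\partial_x u$ in $L_\infty(D)$ by the one-dimensional embedding $H^2(D)\hookrightarrow C^1(\bar D)$. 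Feeding these into the explicit formula \eqref{g}: the scalar multipliers $\llbracket\sigma\rrbracket/(2(1+(\partial_x u_n)^2))$, etc., converge uniformly and remain bounded; the two affine combinations of the traces of $\nabla\psi_{u_n,2}$ entering \eqref{g} converge in $L_{2p}(D)$; squaring and using Cauchy--Schwarz, then multiplying by the uniformly bounded multipliers, shows each of the three terms of \eqref{g} converges in $L_p(D)$. Thus $g(u_n)\to g(u)$ in $L_p(D)$, and since $\bar{\mathcal{S}}$ carries a metric topology this gives the claimed continuity.

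With $g$ in hand, I would apply the chain rule. Since $E_e\in C^1(\mathcal{S},\R)$ by \Cref{C15} and $r\mapsto u_r$ maps $(0,1]$ into $\mathcal{S}$, the function $r\mapsto E_e(u_r)$ is $C^1$ on $(0,1]$ with derivative $\partial_u E_e(u_r)[u_1-u_0]=\int_D g(u_r)(u_1-u_0)\,\rd x$, using $u_1-u_0\in H^2(D)\cap H_0^1(D)$. Integrating over $[s,t]\subset(0,1]$,
\[
E_e(u_t)-E_e(u_s)=\int_s^t\int_D g(u_r)(x)\,(u_1-u_0)(x)\,\rd x\,\rd r .
\]
By the continuity of $g$ just proved and of $r\mapsto u_r$, the integrand $r\mapsto\int_D g(u_r)(u_1-u_0)\,\rd x$ is continuous on the whole closed interval $[0,1]$; letting $s\to 0^+$ and using $E_e(u_s)\to E_e(u_0)$ --- which is precisely \eqref{o2} of \Cref{Thm2}, applicable since $u_s\to u_0$ in $H^1(D)$ with $(u_s)$ bounded in $\bar{\mathcal{S}}$ --- yields
\[
E_e(u_t)-E_e(u_0)=\int_0^t\int_D g(u_r)(x)\,(u_1-u_0)(x)\,\rd x\,\rd r ,\qquad t\in(0,1].
\]
Dividing by $t$ and letting $t\to 0^+$, the continuity at $r=0$ of the integrand gives the directional derivative $\int_D g(u_0)(x)\,(u_1-u_0)(x)\,\rd x$, as asserted.

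I expect the main obstacle to be the continuity of $g$ at states $u_0\in\bar{\mathcal{S}}$ with nonempty coincidence set: there $\Omega_1(u_0)$ fails to be Lipschitz, \Cref{C15} is unavailable, and one must rely solely on the boundary-trace convergence \eqref{o1} of \Cref{Thm2}, taking care of the quadratic dependence of $g$ on the traces of $\nabla\psi_{u,2}$ on $\Sigma(u)$ and on $\{z=u+d\}$. Once this continuity and \Cref{Thm2} are available, the remaining ingredients --- the membership $u_t\in\mathcal{S}$ for $t>0$, the chain rule on the open part $(0,1]$, and the passages to the limit $s\to0^+$ and $t\to0^+$ --- are routine.
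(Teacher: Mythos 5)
Your proposal is correct and follows essentially the same route as the paper: the membership $u_t\in\mathcal{S}$ for $t\in(0,1]$, differentiation of $s\mapsto E_e(u_s)$ via \Cref{C15}, the continuity of $g$ deduced from \eqref{o1}, and the use of \eqref{o2} to extend the fundamental theorem of calculus down to $s=0$ before dividing by $t$. Your more detailed verification of the continuity of $g$ (which the paper dismisses as a straightforward consequence of \eqref{o1}) and of $u_t\in\mathcal{S}$ are just expansions of steps the paper treats tersely.
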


\begin{proof}
The stated continuity of $g$ is a straightforward consequence of \eqref{o1}. Next, given $u_0\in\bar{\mathcal{S}}$ and $u_1\in \mathcal{S}$, we set
\begin{equation*}
u_s:= u_0+s(u_1-u_0)=(1-s)u_0+su_1 \in \mathcal{S}\,,\qquad s\in (0,1]\,.
\end{equation*}
 Since $u_s\in \mathcal{S}$ for $s\in (0,1]$, we deduce from \Cref{C15} that
\begin{equation}\label{bn1}
\begin{split}
\frac{\rd}{\rd s} E_e(u_s) =\, & \int_D g(u_s)(x)\, (u_1-u_0)(x)\,\rd x\,,\qquad s\in (0,1]\,.
\end{split}
\end{equation}
Therefore, letting $s\rightarrow 0$, the continuity of $g$ entails 
\begin{equation}\label{pppl}
\begin{split}
\lim_{s\rightarrow 0^+}\frac{\rd}{\rd s} E_e(u_s)= \int_D g(u_0)(x)\, (u_1-u_0)(x)\,\rd x\,.
\end{split}
\end{equation}
Now \eqref{o2}  guarantees that $E_e(u_s) \rightarrow E_e(u_0)$ as $s\rightarrow 0$,  so that
\begin{equation}\label{bn2}
E_e(u_t)-E_e(u_0)= \int_0^t \frac{\rd}{\rd s} E_e(u_s)\,\rd s\,,\quad t\in (0,1]\,,
\end{equation}
and we conclude from \eqref{pppl} that
\begin{equation*}
\begin{split}
\lim_{t\rightarrow 0^+} \frac{1}{t}\big(E_e(u_t)-E_e(u_0)\big)&= \lim_{t\rightarrow 0^+} \frac{1}{t}\int_0^t \frac{\rd}{\rd s} E_e(u_s)\,\rd s=\int_D g(u_0)(x) \, (u_1-u_0)(x)\,\rd x
\end{split}
 \end{equation*}
as claimed.
\end{proof}

If $\llbracket\sigma\rrbracket<0$, then an obvious consequence of \eqref{g} is that $g$ is non-negative on $\bar{\mathcal{S}}$. This yields the monotonicity of the electrostatic energy~$E_e$.

\begin{corollary}\label{C17b}
Assume $\llbracket\sigma\rrbracket<0$ and let \eqref{bobbybrown} be satisfied. If $u_0\in\bar{\mathcal{S}}$ and $u_1\in \mathcal{S}$ are such that $u_0\le u_1$ in $D$, then \mbox{$E_e(u_0)\le E_e(u_1)$}.
\end{corollary}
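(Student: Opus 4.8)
The plan is to deduce Corollary~\ref{C17b} directly from the directional derivative formula in Corollary~\ref{C17c} together with the sign of $g$. First I would observe that, since $\llbracket\sigma\rrbracket<0$, each of the three terms in the explicit formula~\eqref{g} for $g(u)$ is non-negative: the first two are negative multiples of $\llbracket\sigma\rrbracket$ times a square divided by $1+(\partial_x u)^2>0$, hence $\ge 0$, and the third is manifestly $\ge 0$. Thus $g(u)\ge 0$ a.e. in $D$ for every $u\in\bar{\mathcal{S}}$.

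Next I would set up the interpolation path. Given $u_0\in\bar{\mathcal{S}}$ and $u_1\in\mathcal{S}$ with $u_0\le u_1$ in $D$, define $u_s:=(1-s)u_0+su_1$ for $s\in[0,1]$. As already noted in the proof of Corollary~\ref{C17c}, convexity of $\bar{\mathcal{S}}$ (and the strict inequality $u_1>-H$) gives $u_s\in\mathcal{S}$ for $s\in(0,1]$ and $u_s\in\bar{\mathcal{S}}$ for $s=0$, and moreover $u_1-u_0=\tfrac{\rd}{\rd s}u_s\ge 0$ in $D$. Applying Corollary~\ref{C17c} with base point $u_s$ (for $s\in(0,1)$, which lies in $\mathcal{S}$) and direction $u_1-u_0$, or equivalently using~\eqref{bn1}, yields
$$
\frac{\rd}{\rd s}E_e(u_s)=\int_D g(u_s)(x)\,(u_1-u_0)(x)\,\rd x\ge 0\,,\qquad s\in(0,1]\,,
$$
since both $g(u_s)\ge 0$ and $u_1-u_0\ge 0$. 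Hence $s\mapsto E_e(u_s)$ is non-decreasing on $(0,1]$.

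Finally I would upgrade this to the endpoint $s=0$. The continuity statement~\eqref{o2} from Proposition~\ref{Thm2} gives $E_e(u_s)\to E_e(u_0)$ as $s\to 0^+$ (the path $(u_s)$ is bounded in $\bar{\mathcal{S}}$ and converges to $u_0$ in $H^1(D)$), so integrating the derivative identity as in~\eqref{bn2} gives, for any $t\in(0,1]$,
$$
E_e(u_1)\ge E_e(u_t)=E_e(u_0)+\int_0^t\frac{\rd}{\rd s}E_e(u_s)\,\rd s\ge E_e(u_0)\,,
$$
where the first inequality uses monotonicity on $[t,1]$ and the last uses non-negativity of the integrand; letting $t\to 0^+$ is in fact unnecessary, as $t=1$ already gives $E_e(u_1)\ge E_e(u_0)$. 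This completes the argument.

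There is essentially no hard obstacle here: the entire content has been front-loaded into the explicit formula~\eqref{g} (whose derivation is the real work of Section~\ref{sec2}) and into the continuity/convergence properties recorded in Proposition~\ref{Thm2} and Corollary~\ref{C17c}. The only point requiring a modicum of care is making sure the interpolants $u_s$ genuinely lie in $\mathcal{S}$ rather than merely $\bar{\mathcal{S}}$ for $s>0$ — this uses $u_1>-H$ in $D$ together with $u_0\ge -H$, so that $u_s=(1-s)u_0+su_1>-H$ for $s\in(0,1]$, and the boundary slope condition $\pm\llbracket\sigma\rrbracket\partial_x u_s(\pm L)\le 0$ follows by convexity from the same condition for $u_0$ and $u_1$ — and in invoking~\eqref{o2} to handle the non-smooth base point $u_0$. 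Both are already handled verbatim in the proof of Corollary~\ref{C17c}, so one may simply cite it.
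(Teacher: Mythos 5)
Your proposal is correct and follows essentially the same route as the paper: non-negativity of $g$ from \eqref{g} when $\llbracket\sigma\rrbracket<0$, the derivative identity \eqref{bn1} along the segment $u_s=(1-s)u_0+su_1$, and the integrated identity \eqref{bn2} with $t=1$ (justified via \eqref{o2}) to conclude $E_e(u_0)\le E_e(u_1)$. The extra checks you spell out (that $u_s\in\mathcal{S}$ for $s\in(0,1]$ and the endpoint continuity) are exactly the points the paper delegates to the proof of \Cref{C17c}.
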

	
\begin{proof}
The assumption $\llbracket\sigma\rrbracket<0$ implies that $g(u_s)\ge 0$ for $s\in (0,1]$ according to \eqref{g}, where $u_s= (1-s) u_0 + s u_1$ as in the proof of \Cref{C17c}. Hence, \eqref{bn1} and \eqref{bn2} with $t=1$ imply the assertion.
\end{proof}

\section{Proof of \Cref{Thm3}}\label{sec3}

The proof of \Cref{Thm3} now follows from \Cref{C17} as in \cite{CVPDE22}. Indeed,
\Cref{C17} guarantees that any minimizer of the total energy $E$ on $\bar{\mathcal{S}}_0$ satisfies the Euler-Lagrange equation~\eqref{bennygoodman}. In case that $a>0$, the total energy $E$ is coercive and thus the existence of a minimizer of $E$ on $\bar{\mathcal{S}}_0$ can be shown  as in \cite[Section~7]{CVPDE22}.
In the more complex case $a=0$, the total energy $E$ need not be coercive. But, as pointed out in the introduction, one may enforce its coercivity by adding a penalizing term and proceed along the lines of \cite[Section~6]{CVPDE22}, recalling that the assumption $\llbracket\sigma\rrbracket<0$ guarantees that $g(u)\ge 0$ in $D$ which is essential in this case (see, in particular, \cite[Equation~(6.4)]{CVPDE22}).

\section*{Acknowledgments}

Part of this work was done while PhL enjoyed the hospitality and support of the Institut f\" ur Angewandte Mathematik, Leibniz Universit\"at Hannover.


\bibliographystyle{siam}
\bibliography{MEMSBIB}

\end{document}